\newtheorem{definition}{Definition}
\newtheorem{theorem}[definition]{Theorem}
\newtheorem{lemma}[definition]{Lemma}
\newtheorem{conjecture}{Conjecture}
\newtheorem{observation}[definition]{Observation}
\let\epsilon=\varepsilon
\let\rho = \varrho
\begin{document}
\title{Avoiding $5$-circuits in a $2$-factor of cubic graphs}

\author{
Barbora Candráková${}^1$,
Robert Lukoťka${}^2$
\\[3mm]
\\{\tt candrakova@dcs.fmph.uniba.sk}
\\{\tt robert.lukotka@truni.sk}
\\[5mm]
${}^1$ Faculty of Mathematics, Physics and Informatics\\
Comenius University \\
Mlynská dolina, 842 48 Bratislava\\
${}^2$ 
Faculty of Education\\
Trnava University in Trnava\\
Priemyselná 4, 918 43 Trnava
}

\maketitle

\begin{abstract}
We show that every bridgeless cubic graph $G$ on $n$ vertices other than the Petersen graph 
has a $2$-factor with at most $2(n-2)/15$ circuits of length $5$.
An infinite family of graphs attains this bound.
We also show that $G$ has a $2$-factor with at most $n/5.8\overline{3}$ odd circuits.
This improves the previously known bound of $n/5.41$ [Lukoťka, Máčajová, Mazák, Škoviera:
Small snarks with large oddness, {\tt arXiv:1212.3641 [cs.DM]}].
\end{abstract}

\noindent {\bf Keywords:} cubic graphs, $2$-factor, $5$-circuits, oddness.

\noindent {\bf AMS MSC 2010:} 05C38, 05C70.

\section{Introduction}

Petersen \cite{petersen} showed that every cubic graph
without a bridge has a $1$-factor. We can restate
many well known problems in graph theory 
in terms of $1$-factors in cubic graphs. It often happens that we study $1$-factors through their complementary $2$-factors as certain properties of graphs are better described this way. An example is the following equivalence:
A cubic graph is $3$-edge-colourable if and only if it has
a $2$-factor with even circuits only. 

\emph{Snarks}, connected bridgeless cubic graphs that are not $3$-edge-colourable,
are an intensively studied class of graphs. Many important problems and conjectures can be reduced to snarks:
the $4$-colour theorem, Tutte's $5$-flow conjecture, or the cycle double 
cover conjecture \cite{huck,kochol}.
Problems regarding $1$-factors (and thus $2$-factors) tend to be challenging; a conjecture that there is
exponentially many perfect matchings from 1970s
has been proven only recently \cite{expfactor}.

The minimum number of odd circuits in a
$2$-factor of a bridgeless cubic graph $G$ naturally describe how uncolourable $G$ is.
This parameter is called \emph{oddness} and is denoted by $\omega(G)$. Since every cubic
graph has an even number of vertices, its oddness must be
even. Oddness is an interesting property to consider since the  $5$-flow conjecture and the cycle double 
cover conjecture are proven for snarks of small oddness \cite{jaeger, huckkochol, hag}.
Other parameters quantifying the uncolourability 
of cubic graphs can be related to oddness. We refer the reader to the paper \cite{steffen}.

The presence
of short circuits in a $2$-factor is another interesting property to study with several applications. We can use standard reduction
theorems for circuits of length $2$, $3$, and $4$ \cite{snarks}. On the other hand, circuits of length $5$ pose main obstacles in several problems (e.g. \cite{kral, boyd}) and our knowledge on how to avoid them was very limited except of some well defined situations.

The Petersen graph has two $5$-circuits in all of its $2$-factors. Mkrtchyan and Petrosyan
conjectured that this is the only graph that has only $5$-circuits in each $2$-factor \cite{pgc}.
DeVos immediately confirmed this conjecture \cite{devos}. K\" undgen and Richter
\cite{kundgen} showed that each cubic graph has a $2$-factor with circuit of length at least $7$ except for graphs on four or six vertices, and except for the Petersen graph. This makes
the Petersen graph very special regarding $5$-circuits in its $2$-factor.

Lukoťka, Máčajová, Mazák, and Škoviera \cite{LMMS} studied how small 
a cubic graph with given oddness can be. They developed a method to avoid $5$-circuits
in $2$-factors of cubic graph and
proved that a bridgeless cubic graph $G$ 
not isomorphic to the Petersen graph has at least $5.41 \cdot \omega(G)$ vertices.
They proposed the following conjecture.
\begin{conjecture}\cite{LMMS} \label{con}
Let $G$ be a $2$-edge-connected cubic graph with oddness $\omega(G)$.
Then $G$ has at least $7.5\cdot \omega(G)-5$ vertices. 
\end{conjecture}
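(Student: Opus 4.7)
The conjecture asks us to show that every $2$-edge-connected cubic graph $G$ has a $2$-factor with at most $2(n+5)/15$ odd circuits, which strengthens the $n/5.8\overline{3}$ bound from the main result of this paper. My plan would be to bootstrap from that result using the companion theorem controlling $5$-circuits. I would first reduce to the triangle-free case via the standard triangle-contraction operation, which preserves cubicity, decreases $n$ by $2$, and changes $\omega(G)$ by at most $1$; the additive slack of $-5$ in the conjecture absorbs the loss from any bounded induction.

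Assume now that $G$ is triangle-free, and fix a $2$-factor $F$ given by the main theorem of this paper with at most $c_5 \leq 2(n-2)/15$ circuits of length $5$. The remaining odd circuits of $F$ have length at least $7$; let $c_{\geq 7}$ denote their number, so that $\omega(G) \leq c_5 + c_{\geq 7}$. Writing $E$ for the number of vertices of $G$ lying on even circuits of $F$, a direct vertex count gives
\[
n \;\geq\; 5\,c_5 + 7\,c_{\geq 7} + E,
\]
and the conjectured bound $n \geq 7.5\,\omega(G)-5$ reduces to establishing the \emph{accompaniment inequality} $E \geq 2.5\,c_5 + 0.5\,c_{\geq 7} - 5$ for a suitably chosen $F$. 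In words, on average each $5$-circuit of $F$ must be surrounded by at least $2.5$ vertices on even circuits.

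The main obstacle is proving this accompaniment inequality, which I would attempt by a discharging argument: each odd circuit of $F$ collects charge from its incident vertices in $G$, exploiting cubicity and $2$-edge-connectivity to forbid clusters of odd circuits that are too dense. A complementary tool is the local $2$-factor exchange --- whenever two $5$-circuits of $F$ are linked by a short path through even circuits, an alternating swap merges them into a single longer circuit, often decreasing oddness by $2$. One would then have to classify the configurations in which no such swap is available; these should correspond to the conjectured extremal family, pentagons linked by short, rigidly structured connectors, which give the constant $7.5$ in~\cite{LMMS}. Proving that all other configurations admit a beneficial swap requires a structural understanding of the interaction between multiple $5$-circuits in a $2$-factor that goes beyond the techniques developed here, and this is where the conjecture appears to genuinely surpass the methods of the present paper.
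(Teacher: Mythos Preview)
This statement is Conjecture~\ref{con}, which the paper explicitly presents as an \emph{open} conjecture taken from~\cite{LMMS}; the paper does not prove it. What the paper actually establishes toward it is the weaker bound $|V(G)|\ge 5.8\overline{3}\cdot\omega(G)$ (Theorem~\ref{thm1}) together with the observation that the extremal family for Theorem~\ref{thm2} happens to satisfy the conjecture. There is therefore no proof in the paper for your proposal to be compared against.

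Your proposal is likewise not a proof, as you yourself concede in the final paragraph: everything hinges on the ``accompaniment inequality'' $E \ge 2.5\,c_5 + 0.5\,c_{\ge 7} - 5$, for which you offer only a discharging heuristic and a vague local-exchange idea. That inequality is exactly the gap between $5.8\overline{3}$ and $7.5$, and it is genuinely open. A concrete obstruction to the strategy as you state it: the $2$-factor $F$ supplied by Theorem~\ref{thm2} controls only $c_5$, not the number of longer odd circuits nor the even part $E$. Nothing in that theorem prevents $F$ from consisting entirely of $7$-circuits (so $E=0$ and $c_5=0$), in which case your inequality reads $0 \ge 0.5\,c_{\ge 7} - 5$ and fails once $c_{\ge 7}>10$. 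Retreating to ``a suitably chosen $F$'' abandons the only quantitative input you had. Your reduction step is also misdescribed: the standard reductions (Lemmas~\ref{lemmaGirth}--\ref{lemma3edgeCut}) yield $|V(G')|\le|V(G)|$ and $\omega(G')\ge\omega(G)$, so they work \emph{in your favour} and no ``additive slack'' is needed to absorb anything.
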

\noindent The bound in this conjecture is attained for $\omega(G)\equiv 2 \ (\bmod 4)$.

We refine the methods introduced in \cite{LMMS}. Our approach allows us
to deal separately with certain subgraphs isomorphic 
to three graphs created from the Petersen graph and we improve the bound to $5.8\overline{3} \cdot \omega(G)$. 

We bound the minimal number of $5$-circuits in a bridgeless cubic graph $G$.
We will call this parameter \emph{$5$-cyclicity} and denote it by $\omega_5(G)$.
We prove that the $5$-cyclicity of a $2$-edge-connected cubic graph $G$ on $n$ vertices not isomorphic to the Petersen graph
is at most $2(n-2)/15$. This improves the bound $11n/75$ that can be deduced from \cite{LMMS}.
We construct an infinite family of cubic graphs that attains this bound and
observe that these graphs satisfy Conjecture \ref{con}. This adds additional support for the conjecture.
Moreover, we prove bound $n/9$ for a wide class of $3$-edge-connected cubic graphs.
We prove the bound $n/10$ for cyclically $4$-edge-connected graphs of girth $5$ (also called ``non-trivial snarks'').

All proofs in this work are constructive. The required $2$-factors can be produced by the algorithm 
for minimum-weight perfect matching in bridgeless graphs and 
contain no triangles. The only apparent obstacle, the reductions of colourable graphs separated by small cuts can be circumnavigated by restricting us to cuts separating less than $14$ vertices.

\section{The improved bound on oddness}

First we proof an improved bound on the oddness of a cubic graph. The bound on the $5$-cyclicity will use similar 
(and slightly simpler) argument.
\begin{theorem}
A $2$-edge-connected cubic graph $G$ not isomorphic to the Petersen graph with oddness $\omega(G)$ has at least $5.8\overline{3} \cdot \omega(G)$ vertices.
\label{thm1}
\end{theorem}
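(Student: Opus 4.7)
The plan is to select a $2$-factor $F$ of $G$ that minimizes the number of odd circuits (so $F$ has exactly $\omega(G)$ odd circuits), and then to bound $n$ from below via a charge-redistribution argument adapted from the method of \cite{LMMS}. Let $M$ denote the complementary perfect matching. The first step is to exploit the optimality of $F$: whenever a short alternating path in $M$ could be used to swap portions of two odd circuits and produce a $2$-factor with fewer odd circuits, that configuration must be absent. This severely restricts how $M$ joins odd circuits to each other and to even circuits.

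Next I would set up a discharging argument. Assign every vertex an initial charge $1$, so the total charge equals $n$. The goal is to show that after redistribution, each odd circuit of $F$ carries at least $35/6$ units of charge. Any odd circuit of length $\ge 7$ already has enough charge from its own vertices, since $7>35/6$. The critical case is a $5$-circuit $C$, which starts with only $5$ units and must therefore collect an additional $5/6$ units from vertices on neighbouring even circuits of $F$, transferred along the five $M$-edges leaving $C$. Using the optimality of $F$ together with a local analysis of the five matching neighbours of $C$, one argues that enough charge is accessible, unless a very specific local configuration occurs.

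The heart of the argument is the analysis of these exceptional configurations. They are exactly the three subgraphs obtained from the Petersen graph mentioned in the introduction; each is a small piece of $G$ whose local structure mimics that of $P$ and blocks the naive charge flow. For each of the three subgraphs I would either exhibit a global modification of $F$ that strictly reduces oddness (contradicting the choice of $F$), or replace the subgraph by a smaller gadget, produce an explicit $2$-factor in the reduced graph with no larger oddness, and use induction together with the hypothesis $G\neq P$ to recover the bound. Combining these local inequalities with the global discharging and summing over all odd circuits yields $n\ge 35\omega(G)/6$, which is the claim.

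The main obstacle I expect is the case analysis for the three Petersen-derived configurations: one must show that, excluding $P$ itself, each such configuration can always be broken by a swap-and-reduce argument, and that the numerics match the target constant $35/6$ with no slack eaten up by other simultaneous configurations. A secondary difficulty is to ensure that charge is not double-counted when two distinct $5$-circuits compete for spare vertices on the same even circuit; this will require a careful choice of the discharging rules, probably splitting the charge fractionally along each $M$-edge incident with a $5$-circuit.
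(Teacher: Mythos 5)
Your high-level intuition — that the essential obstruction to getting charge into $5$-circuits comes from three Petersen-derived configurations, and that those configurations must be analysed on their own terms — is correct, and it matches the paper's structural insight. But the machinery you propose for carrying this out differs from what the paper does in ways that I believe are not merely cosmetic, and two of your key steps would not go through as stated.

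First, the choice of $2$-factor. You propose to pick $F$ minimizing the number of odd circuits and then exploit its optimality via alternating-path swaps. The paper does not do this. It instead defines a carefully designed linear function $f$ on the perfect matching polytope ${\cal M}(G)$ — with one term per ``free'' $5$-circuit and one boundary-edge term per Petersen-derived subgraph — and picks a perfect matching minimizing $f$. The crucial step, which has no analogue in your sketch, is that the fractional point $(1/3,\dots,1/3)$ lies in ${\cal M}(G)$ (Edmonds' characterization), which yields the a priori bound $f(p)\le \tfrac{5}{12}|{\cal C}_5|+\tfrac{2}{3}|{\cal P}_1|+\tfrac{1}{3}|{\cal P}_2|+\tfrac{2}{3}|{\cal P}_{3a}|$. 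This is what makes the per-circuit accounting close; a discharging argument starting from a minimum-oddness $2$-factor does not obviously produce the same inequalities, and also loses the paper's explicit claim of constructivity (minimum-oddness $2$-factors are not polynomial-time computable, whereas the $f$-optimal matching is).

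Second, and more substantively, your handling of the exceptional configurations would fail. You propose that each Petersen-derived subgraph can either be ``broken by a swap'' that strictly reduces oddness, or ``replaced by a smaller gadget'' and handled by induction. Neither option is available. The subgraphs $P_1$, $P_2$, $P_3$ are all uncolourable, so the reduction lemmas (which only excise the colourable side of a small cut) never remove them — they genuinely persist in the reduced graph $G$. And they really do force $5$-circuits in any $2$-factor: a subgraph isomorphic to $P_1$ with both boundary edges in $M$ necessarily produces two $5$-circuits. There is no swap that makes them go away. The paper's resolution is not a swap or a reduction but direct vertex accounting (Lemma~\ref{l3}): each such subgraph contributes at least $9$ or $10$ vertices while producing a bounded contribution to $I(M)$, and after verifying disjointness (Lemma~\ref{ldis}) these contributions add without double-counting. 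Your proposal, by contrast, assumes these configurations can always be eliminated, which conflicts with the fact that infinite families built from chains of $P_1$'s attain the analogous bound for $5$-cyclicity.

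So the gap is concrete: you need a mechanism that charges Petersen-derived subgraphs with a fixed, bounded $I$-contribution while crediting them their full vertex count, rather than trying to argue them out of existence. The paper's LP-over-the-matching-polytope plus the explicit vertex count of Lemma~\ref{l3} (including the delicate $|V_4|\le |V_1\cup V_2|$ subclaim, which bounds how often an uncounted vertex can lie on four $5$-circuits) is exactly that mechanism, and I do not see how your discharging sketch recovers it without essentially rebuilding those pieces.
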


We will use the following lemmas from \cite{LMMS} in the proof of the theorem.
\begin{lemma}
For every snark $G$ there exists a snark $G'$ of order not exceeding that of $G$ such that $\omega(G')\ge\omega(G)$ and the girth of $G'$ is at least $5$.
\label{lemmaGirth}
\end{lemma}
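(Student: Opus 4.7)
The plan is to proceed by induction on $|V(G)|$. If the girth of $G$ is already at least $5$, set $G' = G$. Otherwise $G$ contains a circuit $C$ of length $2$, $3$, or $4$, and the strategy is to construct a smaller bridgeless cubic graph $G^*$ by a local reduction at $C$ such that $G^*$ is again not $3$-edge-colourable and $\omega(G^*) \ge \omega(G)$. Invoking the inductive hypothesis on $G^*$ then yields the desired $G'$.

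For a triangle $v_1 v_2 v_3$ with external neighbours $w_1, w_2, w_3$, I would contract the triangle to a single vertex $v^*$ joined to $w_1, w_2, w_3$. The resulting $G^*$ is cubic, bridgeless (a bridge in $G^*$ would lift to a bridge or a $3$-edge-cut separating an odd component in $G$), and any $3$-edge-colouring of $G^*$ lifts to one of $G$ by assigning the three distinct colours to the triangle edges. For oddness, a $2$-factor of $G^*$ using two of the edges $v^* w_i$ extends to a $2$-factor of $G$ by inserting the unique path of length $3$ through the triangle, which enlarges the circuit by $2$ and therefore preserves its parity; hence the number of odd circuits is unchanged. The case of a digon with parallel edges between $u, v$ and external neighbours $w, x$ is handled analogously by replacing $\{u,v\}$ with a single edge $wx$, with the degenerate case $w = x$ treated separately since it forces the whole graph to have a very restricted structure.

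The case of a $4$-circuit $C = v_1 v_2 v_3 v_4$ with external neighbours $u_1, u_2, u_3, u_4$ is the main obstacle. Two reductions are available: delete $C$ and add either $u_1 u_2$ together with $u_3 u_4$, or $u_1 u_4$ together with $u_2 u_3$. I would first argue that at least one of the two reduced graphs is not $3$-edge-colourable, because otherwise one could combine a $3$-edge-colouring of each to obtain a $3$-edge-colouring of $G$ itself. For the chosen reduction one then verifies that every $2$-factor of $G^*$ extends to a $2$-factor of $G$ with no more odd circuits; this requires a short case analysis according to which of the two new edges lie in the $2$-factor of $G^*$ and which of the edges $u_i v_i$ consequently occur in the extended $2$-factor on $C$. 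Degenerate subcases in which the reduction creates parallel edges or coincident $u_i$'s are examined individually, but in each such subcase the parity of the affected circuit is preserved and no new odd circuit is introduced, so the induction closes.
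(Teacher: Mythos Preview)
The paper itself does not prove this lemma; it is quoted from \cite{LMMS}. Your inductive reduction scheme is nonetheless the standard one, and it mirrors what the paper carries out explicitly for the $5$-cyclicity analogue in Lemma~\ref{lemmaGirth2}: suppress short circuits by local modifications, and show that every $2$-factor of the reduced graph lifts to a $2$-factor of $G$ with no more odd circuits.

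Your $4$-cycle step, however, has a real gap. You need the chosen reduction $G^*$ to be a snark, hence simultaneously bridgeless and non-$3$-edge-colourable, and your argument secures neither. The claim ``combine a $3$-edge-colouring of each reduction to colour $G$'' does not work: the two colourings both live on the common edge set $E(G)\setminus\{u_iv_i,\,v_iv_{i+1}\}$ but need not agree there, so there is nothing to combine. The correct (and stronger) statement is that a $3$-edge-colouring of \emph{either single} reduction already extends across the square to a $3$-edge-colouring of $G$; consequently \emph{both} reductions are uncolourable, not merely one. Separately one must invoke the fact (the paper cites \cite{snarks} for it in the proof of Lemma~\ref{lemmaGirth2}) that at least one of the two reductions is bridgeless. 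Only with both facts in hand can you select a reduction that is bridgeless \emph{and} uncolourable; as written, you pick an uncolourable reduction without checking for bridges, and the scenario ``one reduction bridgeless but colourable, the other uncolourable but bridged'' has not been ruled out. Once this is repaired, your lifting and parity analysis for the $2$-factor go through.
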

We say a graph is \emph{colourable} if there exists a $3$-edge colouring of the graph, otherwise we say it is \emph{uncolourable}.
\begin{lemma}
For every snark $G$ there exists a snark $G'$ of order not exceeding that of $G$ such that $\omega(G')\ge\omega(G)$ and every $2$-edge-cut in $G'$ separates two uncolourable subgraphs of $G'$.
\label{lemma2edgeCut}
\end{lemma}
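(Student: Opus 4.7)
The plan is to prove the lemma by induction on $|V(G)|$, reducing $G$ across $2$-edge-cuts until none of them is ``colourable on a side''. Let $\{f_1, f_2\}$ be a $2$-edge-cut of $G$ with sides $A$ and $B$ and endpoints $a_1,a_2 \in A$, $b_1,b_2 \in B$. Form the cubic graphs $G_A = A + e_A$ and $G_B = B + e_B$, where $e_A = a_1a_2$ and $e_B = b_1b_2$. If both $G_A$ and $G_B$ were $3$-edge-colourable, one could glue the two colourings (after permuting colours so $e_A, e_B$ receive the same one) to $3$-edge-colour $G$, contradicting that $G$ is a snark; so at least one side, say $G_A$, is colourable. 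Set $G^{(1)} := G_B$; I would verify that it is a snark with $|V(G^{(1)})| < |V(G)|$ and $\omega(G^{(1)}) \ge \omega(G)$, and then iterate. Since each step strictly reduces the vertex count, the process terminates at the desired $G'$.

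The key inequality $\omega(G_B) \ge \omega(G)$ is proved by lifting $2$-factors. Given an optimal $2$-factor $F_B$ of $G_B$ realising $\omega(G_B)$, the plan is to construct a $2$-factor $F$ of $G$ with no more odd circuits. A parity count on the sum of $F$-degrees in $A$ forces any $2$-factor of $G$ to use both or neither of $f_1, f_2$; hence a valid lift requires a $2$-factor $F_A$ of $G_A$ with $e_A \in F_A$ iff $e_B \in F_B$. Because $G_A$ is colourable, any $3$-edge-colouring yields an all-even $2$-factor of either kind: the union of the two colour classes not containing $e_A$ avoids $e_A$, and the union of the $e_A$-class with one other contains $e_A$. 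Splice $F_A$ and $F_B$ across the cut (replacing $e_A, e_B$ by $f_1, f_2$ when both are used, else taking the disjoint union of their $A$- and $B$-edges) to form $F$.

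The circuits of $F$ away from the cut are circuits of $F_A$ (all even) or of $F_B$ (with original parities). In the splicing case, the combined circuit has length $(L_A - 1) + (L_B - 1) + 2 = L_A + L_B$, where $L_A, L_B$ are the lengths of the circuits of $F_A, F_B$ through $e_A, e_B$. Since $L_A$ is even, the spliced circuit has the same parity as the $F_B$-circuit it replaces, so the odd circuits of $F$ biject with the odd circuits of $F_B$, giving $\omega(G) \le \omega(F) = \omega(G_B)$. Bridgelessness and $2$-edge-connectedness of $G_B$ are a routine check: any small edge-cut in $G_B$ would, using $e_B$ as a surrogate for an $A$-path via $f_1, f_2$, lift to a corresponding cut in $G$.

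The main obstacle is the parity bookkeeping of the splice together with the degenerate cases: cuts with shared or multiple endpoints, where the added virtual edge may become a loop or a parallel edge. I would work in the multigraph category throughout; a loop at a cut endpoint forces the corresponding side to be uncolourable (since a loop admits no proper edge colouring), so such a side is never chosen for reduction and the inductive step remains valid. Parallel-edge cuts merely produce a multi-edge on the added edge, and the argument above goes through unchanged.
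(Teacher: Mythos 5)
Your approach matches the one the paper uses to prove the analogous reduction Lemma~\ref{lemma2edgeCut2} for $5$-circuits: reduce across a $2$-edge-cut that has a colourable side, retain the other side as a smaller snark, and lift an optimal $2$-factor by splicing it with an all-even $2$-factor of the discarded side obtained from a proper $3$-edge-colouring; the spliced circuit inherits the parity of the retained piece because the colourable side always contributes even length. The lift, the bridgelessness check, and termination by vertex count are all correct.

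One sentence needs fixing: the gluing argument proves that \emph{at most one} of $G_A$, $G_B$ is colourable (equivalently, at least one is \emph{un}colourable), not that at least one is colourable. The step should instead read: suppose some $2$-edge-cut has a colourable side (else set $G'=G$ and stop); label that side $G_A$; the gluing argument then forces $G_B$ to be uncolourable, hence a snark, and the induction applies. Also, where the paper picks the cut separating the \emph{smallest} colourable subgraph to ensure the added edge $v_1w_1$ is not parallel to an existing edge, you admit multigraphs instead. That is harmless for oddness --- parallel edges do not change circuit parities --- but it would not suffice in the $5$-cyclicity analogue, where the minimality of the cut is exactly what prevents the splice from producing a new $5$-circuit.
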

\begin{lemma}
For every snark $G$ there exists a snark $G'$ of order not exceeding that of $G$ such that $\omega(G')\ge\omega(G)$ and every $3$-edge-cut in $G'$ separates two uncolourable subgraphs of $G'$.
\label{lemma3edgeCut}
\end{lemma}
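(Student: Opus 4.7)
The plan is to prove the lemma by iterating a single reduction step. Suppose $G$ is a snark with a non-trivial $3$-edge-cut $C=\{e_1,e_2,e_3\}$ whose sides $V_1, V_2$ satisfy $|V_1|,|V_2|\ge 2$. Let $v_i\in V_1$ and $u_i\in V_2$ be the ends of $e_i$, and form the cubic multigraph $H_1^+$ by adjoining to $G[V_1]$ a new vertex $w$ joined to $v_1,v_2,v_3$. Call $V_1$ \emph{colourable} when $H_1^+$ admits a proper $3$-edge-colouring — equivalently, when $G[V_1]\cup C$ has a $3$-edge-colouring giving $e_1,e_2,e_3$ three distinct colours. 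Whenever such a cut exists with $V_1$ colourable, define $G'$ by deleting $V_1$, adding a new vertex $v$, and joining $v$ to $u_1,u_2,u_3$. Then $G'$ is cubic with $|V(G')|<|V(G)|$.

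That $G'$ is again a snark is straightforward. Bridgelessness is preserved because contracting a connected side of an edge cut in a bridgeless graph cannot create a bridge. If $G'$ were $3$-edge-colourable, then after permuting the colour names to agree on the cut with a $3$-edge-colouring of $H_1^+$, one could stitch the two colourings together into a $3$-edge-colouring of $G$, contradicting that $G$ is a snark.

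The heart of the proof is the bound $\omega(G')\ge\omega(G)$. Let $F'$ be a $2$-factor of $G'$ realizing $\omega(G')$; at $v$ it uses exactly two of $e_1,e_2,e_3$, say $e_1,e_2$. Fix a $3$-edge-colouring of $H_1^+$ in which $wv_i$ gets colour $i$. Colour classes $1$ and $2$ together form a $2$-factor of $H_1^+$ consisting only of even circuits, exactly one of which contains $w$. Deleting $w$ turns that circuit into an even-length path $P$ from $v_1$ to $v_2$ in $G[V_1]$, and the remaining circuits $\mathcal{C}$ (even, lying entirely inside $G[V_1]$) account for the rest of $V_1$. Set $F := (F'\cap E(G-V_1))\cup\{e_1,e_2\}\cup E(P)\cup E(\mathcal{C})$. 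A routine degree check shows $F$ is a $2$-factor of $G$, and the only circuit of $F$ that is not already a circuit of $F'$ or a circuit of $\mathcal{C}$ is obtained from the circuit of $F'$ through $v$ by replacing $v$ with $P$; its length grows by the even number $|E(P)|$, so its parity is unchanged. Thus $F$ has the same number of odd circuits as $F'$, giving $\omega(G)\le\omega(G')$.

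Since each reduction strictly decreases the order, iterating terminates in a snark $G^*$ with $|V(G^*)|\le|V(G)|$, $\omega(G^*)\ge\omega(G)$, and every non-trivial $3$-edge-cut separating two uncolourable sides, which is the conclusion. I expect the main obstacle to be the parity bookkeeping in the previous paragraph, which is what makes the extension preserve oddness rather than merely bound it up to an additive loss. A secondary nuisance is that if two of $u_1,u_2,u_3$ coincide, the reduction may create multi-edges or hidden $2$-edge-cuts; these can be cleaned up by a prior application of Lemma~\ref{lemma2edgeCut}. Trivial $3$-edge-cuts isolating a single vertex are tacitly excluded since a singleton side cannot reasonably be called uncolourable.
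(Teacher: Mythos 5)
Your proof is correct and follows essentially the same route as the source: the paper itself imports this lemma from \cite{LMMS}, and both that proof and the paper's analogous Lemma~\ref{lemma3edgeCut2} perform exactly your reduction --- contract the colourable side of a (non-trivial) $3$-edge-cut to a single vertex, glue colourings across the cut to see the result is still a snark, and extend a minimum $2$-factor back through the colourable side using two colour classes of a $3$-edge-colouring, whose circuits are even, so the number of odd circuits is unchanged and $\omega(G')\ge\omega(G)$. Your two flagged nuisances are handled as you suggest: coinciding ends $u_i$ on the uncontracted side would produce a $2$-edge-cut with a colourable side (colour the added edge with the third colour), which Lemma~\ref{lemma2edgeCut} preprocessing excludes, and the statement is indeed meant for non-trivial cuts, as the paper's Lemma~\ref{lemma3edgeCut2} makes explicit.
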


\begin{proof}[Proof of Theorem \ref{thm1}]
Let $G$ be the smallest (in the number of vertices) $2$-edge-connected cubic graph with given oddness.
We can assume by the above lemmas that $G$ does not contain circuits shorter than $5$ and every $2$ and $3$-edge-cut in $G$ separates two uncolourable subgraphs of $G$ (if $G$ reduces to the Petersen graph, then $G$ has oddness $2$, at least $12$ vertices, and satisfies Theorem \ref{thm1}). 

Let ${\cal P}_2$ be the set of subgraphs of $G$ isomorphic to the graph $P_2$ (the Petersen graph with an edge subdivided twice).
\begin{figure}
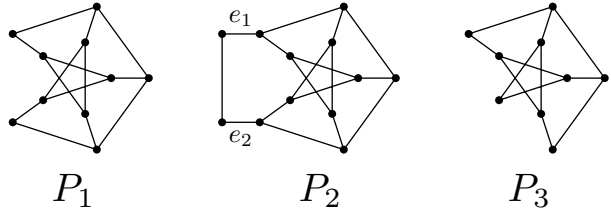

\center
\begin{tabular}{ccccc}
\includegraphics{special-1} & &\includegraphics{special-2}& &\includegraphics{special-3}\\
\end{tabular}
\caption{Graphs $P_1$, $P_2$, $P_3$. Subgraphs isomorphic to these graphs will be investigated separately.}
\label{fig}
\end{figure}
Let ${\cal P}_1$ be the set of subgraphs of $G$ isomorphic to the graph $P_1$ (the Petersen graph with one edge removed) that cannot be extended to a subgraph of $G$ isomorphic to $P_2$. Let ${\cal P}_3$ be the set of subgraphs of $G$ isomorphic to the graph $P_3$ (the Petersen graph with one vertex removed) that cannot be extended to a subgraph of $G$ isomorphic to $P_1$. (These subgraphs are defined in Figure~\ref{fig}.) Let ${\cal P}={\cal P}_1\cup{\cal P}_2\cup{\cal P}_3$. All subgraphs from ${\cal P}$ are not $3$-edge-colourable.

\begin{lemma}
Subgraphs of $G$ from ${\cal P}$ are pairwise disjoint, except when $G$ is a graph on $22$ vertices that
contains two subgraphs isomorphic to $P_2$ intersecting in two vertices and one edge.
This graph fulfils Theorem \ref{thm1}.
\label{ldis}
\end{lemma}
\begin{proof}
Let $E_2$ be the set of edges of $G$ that are in some $2$-edge-cut of $G$. Since all vertices of $P_1$, $P_2$, and $P_3$ are of degree at least two, two intersecting subgraphs $S_1,S_2 \in {\cal P}$ must share a common edge. We take an edge $e=uv \in E(S_1) \cap E(S_2)$. Suppose first that $deg(u)=deg(v)=2$ in both $S_1$ and $S_2$. Then $G$ is the exceptional graph from the lemma statement. We can easily check that Theorem \ref{thm1} holds for this graph.

Suppose that $deg(u)=2$ and $deg(v)=3$ in $S_1$. Since the $deg(v)$ in $S_2$ is at least $2$, there must exist an edge $e'=vw$ that also belongs to $E(S_1) \cap E(S_2)$. We get that $deg(v)=3$ and $deg(w)=3$ in $S_1$ from the structure of the subgraphs from ${\cal P}$. We choose the edge $e'$ as our new edge $e$.

Apparently, the edge $e$ does not belong to $E_2$. Let $S$ be the largest connected subgraph (with respect to the number of edges) of $G$ that contains $e$ and contains no edge from $E_2$. Subgraph $S$ is unique. The properties of graphs $P_1$, $P_2$, and $P_3$ guarantee that $|V(S)|\ge 10$. 

Suppose first that $|V(S)|=10$. Than $S_1$ must be an induced subgraph of $S_2$ or vice versa. This contradicts the definition of subgraphs from ${\cal P}$. On the other hand, suppose that $|V(S)|>10$. Then $S_1$ and $S_2$ are isomorphic to $P_3$. Let $E_3$ be the set of edges that are contained in some minimal edge-cut with independent edges (with respect to set inclusion) of size at most $3$. Let $S'$ be the largest connected subgraph (with respect to the number of edges) of $G$ that contains $e$ and contains no edge from $E_3$. Due to properties of $P_3$ and due to the fact that $S_1$ and $S_2$ cannot be extended to a subgraph isomorphic to $P_1$ we know that $|V(S')|=9$ and $S=S_1=S_2$.
\end{proof}

Hence we can assume all subgraphs from ${\cal P}$ are disjoint. Let $M$ be a perfect matching of $G$ and $F_M$ the complementary $2$-factor. Let ${\cal C}(M)$ be the set of circuits of $F_M$. The symbol $|C|_o$ will denote the length of the circuit $C\in {\cal C}$ when $C$ is odd, otherwise we define $|C|_o=|C|+7$.
Let
$$
I(M)=\sum_{C \in {\cal C}(M)} \frac{7-|C|_o}{2}.
$$
We can bound the number of odd circuits of ${\cal C}(M)$ by the value $I(M)$ where $M$ is chosen in such a way that $I(M)$ is small enough.

Let $C$ be a circuit of $G$ and let $S\in {\cal P}$. We say that $C$ \emph{goes through} $S$ if $C$ intersects $S$ in at least two edges. We split the function $I$ into several parts according to this property.

For $C\in {\cal C}(M)$ and $S$ such that $C$ goes through $S$ we define $I(C,S,M)=(7-|C|_o)/(2k)$ where $k$ is the total number of subgraphs from ${\cal P}$ that $C$ goes through, otherwise we define $I(C,S,M)=0$.
Let $I(S,M)=\sum_{C\in {\cal C}(M)} I(C,S,M)$.

For $C\in {\cal C}(M)$ that does not go through any subgraph from ${\cal P}$ we define $I(C,M)=(7-|C|_o)/2$, otherwise we set $I(C,M)=0$.

We can rewrite the invariant $I$ as 
\begin{eqnarray}
I(M)=\sum_{C \in {\cal C}(M)} I(C,M) + 
\sum_{S\in{\cal P}_1} I(S, M)+\sum_{S\in{\cal P}_2} I(S, M)+\sum_{S\in{\cal P}_3} I(S, M). \label{eqn}
\end{eqnarray}
The splitting allows us to bound the value of $I(M)$ step by step.

We define a linear function and we minimize it over the perfect matching polytope \cite{edmonds} of $G$, denoted by ${\cal M}(G)$. For a point $p\in{\cal M}(G)$ let $p_e$ be the weight corresponding to the edge $e$. For each $S\in{\cal P}_1 \cup {\cal P}_2$ let $e_S$ be one arbitrary edge on the boundary of $S$ (an edge with exactly one vertex belonging to $S$). For each $S\in{\cal P}_3$ we define $E_S$ either to be $\emptyset$, or to contain two edges from the boundary of $S$ whenever possible such that
\begin{enumerate}
\item these two edges are not in the same $7$-circuit that goes through $S$
and can be contained in a $2$-factor of $G$  and
\item these two edges are not in a $9$-circuit that goes through $S$ and some other subgraph from ${\cal P}$.
\end{enumerate}
The set of subgraphs from ${\cal P}_3$ with $E_S\neq \emptyset$ and with $E_S=\emptyset$ will be denoted by ${\cal P}_{3a}$ and ${\cal P}_{3b}$ respectively. 
Let ${\cal C}_5$ be the set of circuits of length $5$ in $G$ not going through any subgraph from ${\cal P}$. For each $C \in {\cal C}_5$ let $E_C$ be the set of edges on the boundary of $C$.

We define the linear function as follows:
$$
f(p)=\left(\sum_{C \in {\cal C}_5} \frac{1}{4}\sum_{e\in E_C} p_e  \right) + \sum_{S\in {\cal P}_1} 2p_{e_S} +
\sum_{S\in {\cal P}_2} p_{e_S} + \left( \sum_{S\in {\cal P}_{3a}} \sum_{e\in E_S} p_e\right).
$$
Since we maximize over a polytope, the optimal value is attained at some vertex of the polytope - some perfect matching in case of the perfect matching polytope. We find a point $p$ in ${\cal M}(G)$ such that $f(p)$ is minimal. 
Due to the characterization of the perfect matching polytope, 
the point $(1/3, 1/3, \dots ,1/3)$ lays within the polytope \cite{edmonds}. 
Hence, for optimal point $p$ of ${\cal M}(G)$
\begin{eqnarray}
f(p) \le 5/12 |{\cal C}_5|  + 2/3 |{\cal P}_1|+ 1/3 |{\cal P}_2| + 2/3|{\cal P}_{3a}|.
\label{eqn2}
\end{eqnarray}

Let $M$ be a perfect matching satisfying $f(M)=f(p)$ such that the number of pairs $(S,C)$, 
where $S\in {\cal P}_2$ and $C\in {\cal C}(M)$ goes through $S$, is minimal. 
We show that $I(M) \le f(M) - |{\cal C}_5|/4 +|{\cal P}_{3b}|$. To do this, we use the equation (\ref{eqn}) and bound the summands of $I(M)$ one by one.

\medskip
\noindent {\bf Part 1:} First, we consider the value $I(C,M)$ for $C\in{\cal C}(M)$. 
For a circuit $C\in {\cal C}(M)$ of length more than $5$ or a circuit going through some subgraph from ${\cal P}$
we have $I(C,M)\le 0$ by definition. The only remaining case to consider is a circuit not going through any subgraph from ${\cal P}$ and of length $5$ (that is $C\in {\cal C}_5$). 
By definition we have $I(C,M)=1$.
All boundary edges of $C$ belong to $M$, that is $\sum_{e\in E_C} p_e=5$.
Because $\sum_{e\in E_C} p_e\ge 1$ 
we know that $I(C,M)\le -1/4+1/4\sum_{e\in E_C} p_e$.
Altogether 
$$
\sum_{C\in{\cal C}(M)} I(C, M) \le 
\left(\sum_{C \in {\cal C}_5} \frac{1}{4}\sum_{e\in E_C} p_e  \right)-\frac{1}{4}|{\cal C}_5|.
$$

\medskip
\noindent {\bf Part 2:} Consider a subgraph $S \in {\cal P}_1$. Suppose that $p_{e_S}=1$. This means that both edges on the boundary of $S$ belong to $M$. Therefore exactly two $5$-circuits go through $S$. By definition we have $I(S,M)=2$. 

On the other hand, if $p_{e_S}=0$ (none of the boundary edges belongs to $M$), then two circuits of ${\cal C}(M)$, $C_1$ and $C_2$, go through $S$ such that $|C_1|=5$ and $|C_2|\geq 8$ (If $C_2$ had length $7$, then $S$ could be extended to a subgraph isomorphic to $P_2$). Clearly, $I(C_1,S,M)=1$. We bound the value of $I(C_2,S,M)$.

\begin{observation}
For circuit $C_2$ we have $I(C_2,S,M) \le -1$.
\label{oc2}
\end{observation}
\begin{proof}
If $|C_2|=9$, then $C_2$ cannot go through any other subgraph of ${\cal P}$ other than $S$. Indeed, this could only happen if $S$ was connected to $S_2 \in {\cal P}_3$ by two edges which would imply a bridge. This shows that $I(C_2,S,M)=-1$.

Let $k$ be the number of subgraphs from ${\cal P}$ that $C_2$ goes through. Suppose that $|C_2|>9$ and $C_2$ is odd. Then $C_2$ contains at least $5$ vertices form $S$ and at least $4$ vertices from each other subgraph that $C_2$ goes through. Therefore $|C_2|\ge4k+1$. Since $k$ is an integer, $k\le \lfloor(|C_2|-1)/4\rfloor$ and we have
$$
I(C_2,S,M)=(7-|C_2|)/2k\le (7-|C_2|)/ 2\lfloor(|C_2|-1)/4\rfloor \le-1.
$$

Suppose that $C_2$ is even. We have, again, 
$$
I(C_2,S,M)=(7-(|C_2|+7)/2k \le (-|C_2|)/ 2\lfloor(|C_2|-1)/4\rfloor \le-1.\qedhere
$$
\end{proof}
\noindent This shows that $I(S,M)\le 0$ when $p_{e_S}=0$. 

Merging the situations $p_{e_S}=0$ and $p_{e_S}=1$ into a single inequality yields $I(S,M) \le 2p_{e_S}$.
Altogether 
$$
\sum_{S\in{\cal P}_1} I(S,M) \le \sum_{S\in{\cal P}_1} 2p_{e_S}.
$$

\medskip
\noindent {\bf Part 3:} Suppose we have a subgraph $S \in {\cal P}_2$. Similarly as before, if $p_{e_S}=1$, then $I(S,M)=1$ as we have a $5$-circuit and a $7$-circuit going through $S$. 

If $p_{e_S}=0$, then due to the minimality of the number of circuits going through $S$, 
only two circuits go through $S$: one circuit of length $5$, and the second of length at least $9$. By similar argumentation as in Observation \ref{oc2} we have
$I(S,M)\le 0$. 
In both cases we get $I(S,M) \le p_{e_S}$ and altogether
$$
\sum_{S\in{\cal P}_2} I(S,M) \le  \sum_{S\in{\cal P}_2} p_{e_S}.
$$

\medskip
\noindent {\bf Part 4a:} Suppose we have a subgraph $S \in {\cal P}_{3a}$.   
We have two edges in $E_S$ and the value of $\sum_{e\in E_S} p_e$ can be either $0,1,$ or $2$. 

If $\sum_{e\in E_S} p_e=0$, then there can be at most one $5$-circuit $C_1$ and one other circuit $C_2$ going through $S$. Clearly, $I(C_1,S,M)= 1$. We know that $C_2$ is not of length $7$, and if it is of length $9$,
then it only goes through one subgraph from ${\cal P}$ (conditions on the set $E_S$). 
We obtain $I(C_2,S,M)\le -1$ by similar arguments as in Observation \ref{oc2}.  

If $\sum_{e\in E_S} p_e=1$, then there can be at most one $5$-circuit $C_1$ and one other circuit $C_2$ of length more than $5$ going through $S$. Clearly, $I(C_1,S,M)= 1$ and $I(C_2,S,M)\le 0$.  

If $\sum_{e\in E_S} p_e=2$, then there is only one $9$-circuit $C_1$ going through $S$.
 Clearly, $I(C_1,S,M)=-1$.  

Putting all the cases together, we have $I(S,M) \leq \sum_{e\in E_S} p_e$. Altogether
$$
\sum_{S\in{\cal P}_{3a}} I(S,M) \le \sum_{S\in{\cal P}_{3a}} p_{e_S}.
$$

\noindent {\bf Part 4b:} Suppose we have a subgraph $S \in {\cal P}_{3b}$ and each pair of boundary edges
are either in a common $2$-factor $7$-circuit
or in a common $9$-circuit going through two subgraphs from ${\cal P}$.

First let us deal with the latter case. We show that no two boundary edges of $S\in{\cal P}_{3b}$ lay in a common
$9$-circuit going through another subgraph $S_2$ from ${\cal P}$. Suppose otherwise. The subgraph $S_2$ can be isomorphic only to $P_3$ as an isomorphism with $P_1$ would imply a bridge in the graph.
Let $a_1,a_2,a_3$ be the edges on the boundary of $S$ and $b_1,b_2,b_3$ the edges on the boundary of $S_2$. Since there is a $9$-circuit going through both $S$ and $S_2$ we can assume that $a_1=b_1$ and the edges $a_2$ and $b_2$ are incident to a common vertex $v$. If $a_3$ and $b_3$ do not share a common vertex, we can choose $a_1$ and $a_3$ into $E_S$, which is a contradiction with $S$ belonging to ${\cal P}_{3b}$. Therefore, there must be a vertex $v_2$ incident to both $a_3$ and $b_3$. The vertices $v$ and $v_2$ cannot have a common neighbour (a bridge would be created) but then we can choose the edges $a_2$ and $a_3$ into $E_S$ as they are not contained in a $7$-circuit and since vertices $v$ and $v_2$ cannot be in a subgraph from ${\cal P}$, the edges $a_2$ and $a_3$ are not in a common $9$-circuit and thus we can choose them into $E_S$.

We are left with the case where each pair of boundary edges is in a $7$-circuit of some $2$-factor of $G$. Let $v_1,v_2$, and $v_3$ be the vertices if degree two in $S$ and $w_1,w_2$, and $w_3$ their neighbours outside $S$.
The fact that each pair of boundary edges must lay in a $7$-circuit implies only 
two possible configurations: $P_{3b,1}$ -- all the vertices $w_1,w_2$, and $w_3$ are adjacent to a common vertex; $P_{3b,2}$ -- each pair of the vertices $w_1,w_2$, and $w_3$ is adjacent to different vertex $x_1, x_2,$ or $x_3$ (see Figure~\ref{figspec}).
\begin{figure}
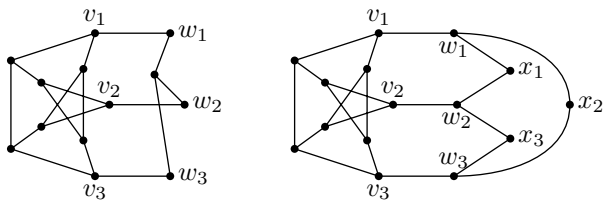

\label{figspec}
\center
\begin{tabular}{ccc}
\includegraphics{p3cases-1} & &\includegraphics{p3cases-2}\\
\end{tabular}
\caption{Configurations $P_{3b,1}$ and $P_{3b,2}$.}
\end{figure}
Only the configuration $P_{3b,2}$ can contain boundary edges of $S$ in a common $7$-circuit of some $2$-factor. Regardless of the $2$-factor, at most two circuits may go through $S$
and at most one of them is of length $5$. Therefore $I(S,M) \leq 1$ and altogether
$$
\sum_{S\in{\cal P}_{3b}} I(S,M) \le |{\cal P}_{3b}|.
$$

We make an important observation about the subgraph $S\in {\cal P}_{3b}$ in the $P_{3b,2}$ configuration that we will use later.
\begin{observation} Vertices $w_1$, $w_2$, and $w_3$ (neighbours of $S$) satisfy the following properties:
They can be in at most one $5$-circuit.
They are not inner vertices of any subgraph from ${\cal P}$.
None of these vertices has a neighbour from another subgraph ${\cal P}_{3b}$.
\label{obs}
\end{observation}
\begin{proof}
Any $5$-circuit containing $w_1$ must also contain $x_1$ and $x_2$. 
There is at most one possibility how to extend this path into $5$-circuit.
Therefore $w_1$ (and, by symmetry, $w_2$, and $w_3$) is only in one $5$-circuit.

Suppose for contradiction that $w_1$ is an inner vertex of a subgraph $S_2$ from ${\cal P}$.
Vertex $w_1$ has degree at most $2$ in $S_2$. Therefore $S \not \in {\cal P}_2$ because $P_2$ has no inner vertex of degree $2$.
Subgraphs from ${\cal P}$ are disjoint and have minimal degree $2$. This means that
$x_1,x_2$ are two inner vertices from $S_2$. Since any neighbour of a vertex of degree $2$ in $P_1$
or $P_2$ has degree $3$, we know that $w_2 \in S_2$. The vertex $w_2$ can have degree at most $2$ in $S_2$,
which is a contradiction since neither $P_1$ nor $P_2$ contain two vertices of degree $2$ in distance $2$.
By symmetry, none of the vertices $w_1$, $w_2$, and $w_3$ is an inner vertex of a subgraph from ${\cal P}$.
 
Suppose for contradiction that $w_1$ has a neighbour that is in subgraph $S_3$ from ${\cal P}_{3b}$.
We can assume that $x_2$ is the neighbour. Since $x_2$  must have degree $2$
in $S_3$ and all vertices in distance at most $2$ in $S_3$ have degree $3$ in $S_3$ 
($S_3$ is isomorphic to $P_3$) we know that $w_3\in S_3$, and consequently $v_3\in S_3$.
This is a contradiction with the fact that subgraphs from ${\cal P}$ are disjoint.
By symmetry, $w_1$, $w_2$, and $w_3$ ale all neighbours only to one subgraph from ${\cal P}_{3b}$,
that is $S$.
\end{proof}

Putting all derived equations for the invariant $I$ together with the equation (\ref{eqn}) we get that
\begin{eqnarray}
I(M) \le f(M) - |{\cal C}_5|/4 +|{\cal P}_{3b}|. \label{eqnX}
\end{eqnarray}
By (\ref{eqn2}) and (\ref{eqnX}) we get that
\begin{eqnarray}
I(M) \le \frac{1}{6} |{\cal C}_5| +  2/3 |{\cal P}_1|+ 1/3 |{\cal P}_2| + 2/3|{\cal P}_{3a}|+|{\cal P}_{3b}|. \label{eqn3}
\end{eqnarray}
Let $M(G)$ denote the perfect matching $M$ constructed in this way and $I(G)$ denote the value of $I(M(G))$.
We can proceed to the final step. We bound $V(G)/\omega(G)$ in terms of $I(G)/V(G)$.

Let $k$ be the number of  odd circuits of $M(G)$.
Let $m$ be the number of vertices in even circuits of $M(G)$. 
For an even circuit $(7-|C|_o)/2=-|C|/2$.
Therefore  $I(G)=-m/2+7k/2-(|V(G)|-m)/2=7k/2-|V(G)|/2$. (Recall the very first definition of $I$.)
Since $\omega(G)\le k$ we have $\omega(G) \le \frac{2I(G)+|V(G)|}{7}$. Now we are ready to bound $V(G)/\omega(G)$.
\begin{eqnarray}
\frac{V(G)}{\omega(G)} \ge \frac{|V(G)|}{\frac{2I(G)+|V(G)|}{7}}=\frac{7}{1+2\frac{I(G)}{|V(G)|}}. 
\label{eqn41}
\end{eqnarray}

To bound $I(G)/|V(G)|$ we must first bound $|V(G)|$ in terms of $|{\cal C}_5|$, $|{\cal P}_1|$, $|{\cal P}_2|$, $|{\cal P}_{3a}|$, and $|{\cal P}_{3b}|$.
\begin{lemma}\label{l3}
$V(G) \ge 5/3 |{\cal C}_5| + 10 |{\cal P}_1| + 10 |{\cal P}_2| + 9 |{\cal P}_{3a}|+10|{\cal P}_{3b}|$.
\end{lemma}
\begin{proof}
Recall that the graph $G$ is the smallest graph with chosen oddness. By reduction lemmas, it does not contain circuits shorter than $5$, and every $2$-edge-cut and $3$-edge-cut in $G$ separates 
two uncolourable subgraphs of $G$.

We will count separately the vertices from ${\cal P}_1$, the inner vertices from ${\cal P}_2$, and the vertices from ${\cal P}_3$. This is $10 |{\cal P}_1| + 10 |{\cal P}_2| + 9 |{\cal P}_{3a}|+9|{\cal P}_{3b}|$ vertices. Other vertices will be called \emph{uncounted}.

Consider an uncounted vertex $v$. We know that the girth of $G$ is at least $5$, therefore the neighbourhood of $v$ of size $2$ is perfectly determined: $v$ is neighbour to three vertices $v_1$, $v_2$, and $v_3$. Vertex $v_i$ for $i\in\{1,2,3\}$ has two neighbours other than $v$, we denote them $v_{i1}$ and $v_{i2}$. All these vertices must be pairwise distinct, otherwise $G$ would contain a short circuit.

If $v$ is in $6$ circuits of length $5$, then $G$ is the Petersen graph. If $v$ was in $5$ circuits of length $5$, then $\{v\} \cup \left\{v_i, v_{i1},v_{i2} \ | \ i\in \{1,2,3\}\right\}$ can by separated by a $2$-edge-cut. Since $v$ is outside ${\cal P}$, the separated graph is $3$-edge-colourable. This is a contradiction. Therefore, $v$ is in at most $4$ circuits of length $5$. 

Let $V_k$ be the set of uncounted vertices that are in exactly $k$ circuits of length $5$ for all $2\le k\le 4$, and let $V_1$ be the set of uncounted vertices that are in at most $1$ circuit of length $5$. We want to show that $|V_4|\le |V_2 \cup V_1|$. We assign each vertex $v$ from $V_4$ two distinct tuples $(v_2,C)$ where $v_2 \in V_2 \cup V_1$, $C \in C_5$, and both $v$ and $v_2$ share the circuit $C$. If the set of all assigned tuples to the vertices from $V_4$ is pairwise disjoint, we call such assignment a \emph{proper assignment}.

\begin{lemma} 
$|V_4| \le |V_2 \cup V_1|$.
\label{cl1}
\end{lemma}
\begin{proof}
We construct a proper assignment on $G$.
For each vertex $v \in V_4$ we find two vertices contained in at most two $5$-circuits one of which is common with $v$. Note that both such vertices must be uncounted (hence belong to $V_2 \cup V_1$), since each counted vertex is in at least two $5$-circuits containing only counted vertices (Figure \ref{fig}, recall that the vertices of degree $2$ in $P_2$ are uncounted).

Suppose that the vertex $v$ is in $4$ circuits of length $5$. 
We proceed to analyse the close neighbourhood of the vertex $v$.
 There are four edges induced by vertices $\left\{v_{i1},v_{i2} \ | \ i\in \{1,2,3\}\right\}$. We denote this set by $E_v$. There are four more edges adjacent to $v_{ij}$ leading outside the subgraph. We denote this set by $\overline{E}_v$. These edges must be incident to $4$ disjoint vertices $x_1,x_2,x_3,$ and $x_4$. Otherwise we can separate $v$ by an $3$-edge-cut where the separated subgraph is $3$-edge-colourable (or isomorphic to $P_3$), which contradicts our assumptions.

Let $a_v^i$ be the number of pairs $(v_{ik},e)$, where $k\in\{1,2\}$ and $e\in E_v$ is incident to $v_{ik}$. Note that $2\le a_v^i \le 4$ for each $i$ due to the girth condition on $G$. There are only two possible multisets $X_v=\{a_v^1,a_v^2,a_v^3\}$. It is either $\{4,2,2\}$ or $\{3,3,2\}$.
\begin{figure}
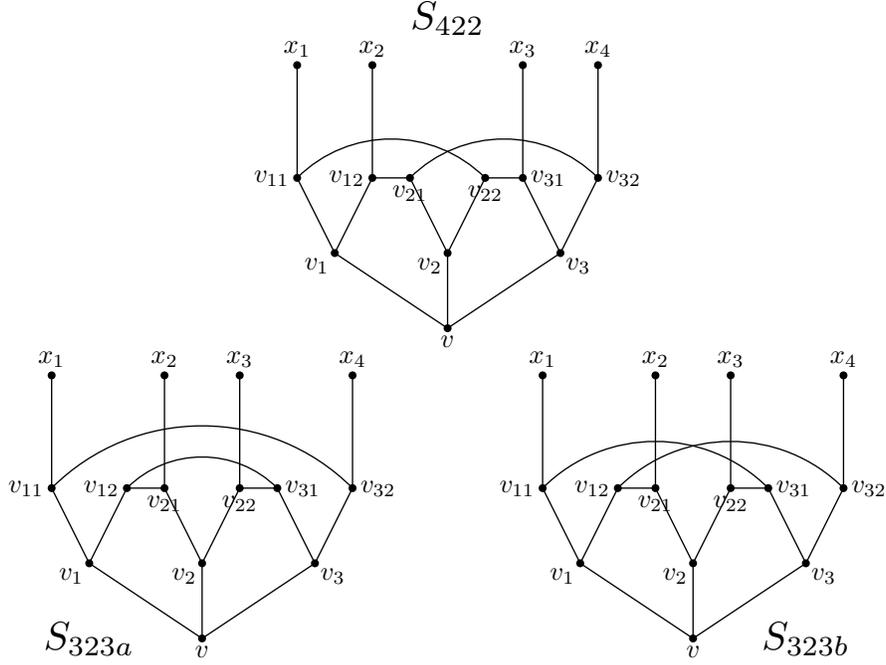

\center
\includegraphics{v45-1} \\
\includegraphics{v45-2} \ \ \ \ \ \ \ \ \ \includegraphics{v45-3}\\
\caption{Possible surroundings of an uncounted vertex contained in $4$ circuits of length $5$.}
\label{fig2}
\end{figure} 

Consider the multiset $\{4,2,2\}$, which determines a surrounding of the vertex $v$. We denote it by $S_{242}$ (Figure \ref{fig2}), as we may without loss of generality set $a_v^2=4$ and $v_{21}v_{12}, v_{22}v_{31} \in E_v$. The girth condition guarantees that the remaining edges from $E_v$ are $v_{22}v_{11}$ and $v_{21}v_{32}$. We can easily observe that for each path of length $2$ centred around $v_1$, that is $vv_1v_{11}$, $vv_1v_{12}$, and $v_{11}v_1v_{12}$, there is at most one $5$-circuit containing that path. Therefore, there are at most $3$ circuits of length $5$ going through $v_1$. 

Let us consider vertices $v_{11}$ and $v_{12}$. By analysing the neighbourhood of these vertices and using the argument that a path of length $3$ determines at most one $5$-circuit (the girth condition), we get that $v_{11}$ can be contained in at most $3$ circuits of length $5$: $v_1v_{11}v_{22}v_2v, v_1v_{11}x_1x_2v_{12}$ and $x_1v_{11}v_{22}v_{31}x_3$. However, if this vertex is contained in all of these circuit at the same time, then there exists a $3$-edge-cut separating a $3$-edge-colourable subgraph (the edge-cut is $x_2y_2$, $x_3y_3$, and $v_{32}x_4$, where $y_2$ and $y_3$ are neighbours of $x_2$ and $x_3$ that are not yet denoted, respectively). 
This is again a contradiction with Lemma \ref{lemma3edgeCut}. Similar argument can be used for 
vertices $v_{12}$, $v_{31}$, and $v_{32}$. Hence, all these vertices are contained in at most $2$ circuits of length $5$. 
Similarly, it can be shown that the vertices $v_{21}$ and $v_{22}$ are contained in at most $3$ circuits of length $5$, and the vertex $v_2$ in $4$ circuits of length $5$. 

There are four tuples that can be assigned to $v$: 
$$(v_{11}, vv_1v_{11}v_{22}v_2),  (v_{12},vv_1v_{12}v_{21}v_2),  (v_{31},vv_2v_{22}v_{31}v_3),  (v_{32},vv_2v_{21}v_{32}v_3).$$
If the vertex $v_2$ is already assigned some tuples, we choose two tuples from the above four that are not used. Otherwise we can immediately assign two tuples for both vertices $v$ and $v_2$. As the circuits of the tuples contain no other vertices from $V_4$, the set of assigned tuples always remains pairwise disjoint.

On the other hand if, $X_v=\{3,3,2\}$, then we may suppose that $a_v^2=2$. Consider the two edges from $E_v$ that are incident to the vertices $v_{21}$ and $v_{22}$. Suppose that they are both adjacent to the vertex $v_{21}$ (or $v_{22}$). Then $v$ can be separated by a $3$-edge-cut. Since the separated subgraph must not be $3$-edge-colourable, it must be isomorphic to $P_3$. However, this contradicts the assumption of $v$ being uncounted. 

Hence, we may suppose that $v_{21}v_{12}, v_{22}v_{31} \in E_v$. We get two possible surroundings: $S_{323a}$ where the two remaining edges from $E_v$ are $v_{11}v_{32}$ and $v_{12}v_{31}$, and $S_{323b}$ with $v_{11}v_{31},v_{12}v_{32} \in E_v$ as shown in Figure \ref{fig2}.

It can directly be checked in $S_{323a}$ that both vertices $v_1$ and $v_3$ are contained in at most $3$ circuits of length $5$ and $v_{11}$, and $v_{32}$ are contained in at most $2$ circuits of length $5$. We assign tuples $(v_{11},vv_1v_{11}v_{32}v_3)$ and $(v_{32},vv_1v_{11}v_{32}v_3)$ to $v$.

It can also be checked in $S_{323b}$ that all vertices $v_1$, $v_3, v_{12}$, and $v_{31}$ are contained in at most $3$ circuits of length $5$ and $v_{11}$, and $v_{32}$ are contained in at most $2$ circuits of length $5$. We assign tuples $(v_{11},vv_1v_{11}v_{31}v_3)$ and $(v_{32},vv_1v_{12}v_{32}v_3)$ to the vertex $v$.
In both cases the circuits in assigned tuples contain only one vertex from $V_4$, therefore no tuple is assigned more than once.

Let $k$ be the number of pairs $(w,C)$ where $w \in V_2 \cup V_1$ and $C$ is a $5$-circuit. The constructed assignment implies that $2|V_4| \le k$. Clearly $k \le 2|V_2 \cup V_1|$.
Hence the observation follows.
\end{proof}

Let $n$ be the number of uncounted vertices. We want to determine the number of pairs $(v,C)$ where $C$ is a $5$-circuit and $v$ is an uncounted vertex of $C$. The number of pairs is clearly at most $4|V_4|+3|V_3|+2|V_2|+|V_1|$.

According to Observation \ref{cl1} $|V_4|\leq |V_2|+|V_1|$ and
$4|V_4|+3|V_3|+2|V_2|+|V_1|\leq 3(|V_4|+|V_3|+|V_2|+|V_1|) - |V_1| = 3n-|V_1|$.

Observation \ref{obs} states that there are at least $3 |{\cal P}_{3b}|$ uncounted vertices that are in at most one $5$-circuit. Therefore
$3n-|V_1|\le 3n-3|{\cal P}_{3b}|$.

On the other hand, we have exactly $5 \cdot |{\cal C}_5|$ such pairs. Therefore, $3 n - 3|{\cal P}_{3b}| \ge 5|{\cal C}_5|$. Altogether, we get the result of the lemma $V(G) \ge 5/3 |{\cal C}_5| + 10 |{\cal P}_1| + 10 |{\cal P}_2| + 9 |{\cal P}_3a| + 
10|{\cal P}_{3b}|$.
\end{proof}

We can now conclude the proof of the Theorem \ref{thm1}. By (\ref{eqn3}) and by Lemma~\ref{l3} we get that
$$
\frac{I(M)}{V(G)} \le \frac{1/6 |{\cal C}_5| +  2/3 |{\cal P}_1|+ 1/3 |{\cal P}_2|+ 2/3|{\cal P}_{3a}|+|{\cal P}_{3b}|}
{ 5/3|{\cal C}_5| + 10 |{\cal P}_1| + 10 |{\cal P}_2|+9 |{\cal P}_{3a}|+10|{\cal P}_{3a}|} \le 1/10.
$$

Therefore by (\ref{eqn41}) we have $V(G)/\omega(G) \ge 35/6>5.8\overline{3}$.

\end{proof}

\section{Reduction lemmas for $5$-cyclicity}

We start with the analogues of Lemmas \ref{lemmaGirth}, \ref{lemma2edgeCut}, and 
\ref{lemma3edgeCut}.
\begin{lemma}
For every snark $G$ there exists a snark $G^*$ of order not exceeding that of $G$ such that 
\begin{enumerate}
\item
the girth of $G^*$ is at least $5$;
\item
$\omega_5(G^*)\ge\omega_5(G)$;
\item
if there is a $2$-factor of $G^*$ with $m$ circuits of length $5$, then there is a $2$-factor of $G$ with $m$ circuits of length $5$ and no circuits of length $3$.
\end{enumerate}
\label{lemmaGirth2}
\end{lemma}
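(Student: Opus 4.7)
The plan is to proceed by induction on $|V(G)|$, paralleling the standard proof of Lemma~\ref{lemmaGirth}. If $G$ already has girth at least $5$, set $G^*=G$ and all three conditions hold trivially. Otherwise $G$ contains a circuit of length at most $4$ and we apply the corresponding standard reduction from \cite{snarks}: suppression of a digon, contraction of a triangle to a single vertex, or one of the standard replacements of a $4$-circuit by two new edges. Each reduction yields a smaller bridgeless cubic graph $G'$, and one checks routinely that every $3$-edge-colouring of $G'$ extends to a $3$-edge-colouring of $G$ via the standard colour-restoration rules around digons, triangles, and $4$-cycles; hence $G'$ is again a snark. By the inductive hypothesis applied to $G'$ we obtain a graph $G^*$ with the three claimed properties relative to $G'$, and it suffices to verify that the single reduction step $G\to G'$ itself lifts $2$-factors in the sense of condition~(3); composing the two lifts then gives the lemma for $G$.

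The second step is the lifting verification, which I would carry out case by case. For a digon on parallel edges $uv$ with outside neighbours $u'$ and $v'$, the new edge $u'v'\in E(G')$ is either in the $2$-factor $F'$, in which case we extend the circuit containing $u'v'$ to pass through $u$ and $v$ via one of the parallel edges; or it is not in $F'$, in which case we include both parallel edges, so that $F$ contains a $2$-circuit on $u,v$. For a triangle $abc$ contracted to a vertex $v$ with outside neighbours $x_a,x_b,x_c$, the factor $F'$ uses exactly two of the edges at $v$, say $vx_a$ and $vx_b$, and the lift replaces them by the path of length four from $x_a$ through $a,c,b$ to $x_b$ inside the triangle. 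For a $4$-circuit $abcd$ suppressed to two new edges of $G'$, each new edge that belongs to $F'$ is replaced by a path of length three traversing $abcd$. In all three cases the circuits of $F$ and $F'$ agree except on those passing through a new edge of $G'$, and each such circuit grows by exactly two vertices under the lift. Consequently, a $5$-circuit of $F'$ remains a $5$-circuit of $F$ unless it grows into a $7$-circuit, so $F$ has at most $m$ circuits of length $5$; and no new $3$-circuit is ever created, since $F'$ contains none by the inductive hypothesis and the reductions only lengthen existing circuits. This simultaneously establishes conditions~(2) and~(3) under the natural reading ``at most $m$ circuits of length $5$'' of condition~(3).

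The main obstacle is the $4$-circuit case. First, one must select the correct one of the admissible replacements, because pathological configurations such as a chorded $4$-circuit or coincident outside neighbours can force one choice to introduce a parallel edge or a bridge; the classical snark reductions in \cite{snarks} provide a uniform selection rule preserving bridgelessness and uncolourability. Second, a short case analysis is needed according to which of the two new edges of $G'$ lie in $F'$ and whether they lie on a common circuit of $F'$, to verify that the edges chosen in the lift actually form a $2$-factor of $G$ and that the traversal of $abcd$ creates no new $3$-circuit. Once these verifications are in place the induction closes: chaining the two ``at most $m$'' lifts yields a $2$-factor of $G$ with at most $m$ circuits of length $5$ and no $3$-circuits, and in particular $\omega_5(G^*)\ge\omega_5(G')\ge\omega_5(G)$.
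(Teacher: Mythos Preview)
Your approach is the paper's: induction on $|V(G)|$, the standard digon/triangle/$4$-circuit reductions, and a verification that a $2$-factor $F'$ of $G'$ with no $3$-circuits lifts to a $2$-factor of $G$ without introducing new $3$- or $5$-circuits.

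One inaccuracy to fix: your assertion that in all three cases ``each such circuit grows by exactly two vertices'' is false for the $4$-circuit reduction. With the $4$-circuit $v_1v_2v_3v_4$ replaced by the edges $w_1w_2$ and $w_3w_4$, if only $w_1w_2\in F'$ then the lift must route the path $w_1v_1v_4v_3v_2w_2$ through all four deleted vertices (otherwise $v_3,v_4$ are uncovered), so that circuit grows by four; if neither new edge lies in $F'$, the lift simply adds the $4$-circuit $v_1v_2v_3v_4$. Your rule ``replace each new edge in $F'$ by a path of length three'' is therefore not a complete description of the lift. None of these cases creates a $3$- or $5$-circuit as long as $F'$ has no $3$-circuits, so your conclusion survives, but the growth bookkeeping should be stated correctly. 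The paper also singles out the degenerate $4$-circuit subcases where opposite outside neighbours coincide ($w_1=w_3$ and/or $w_2=w_4$), handling them by ad hoc contractions rather than the two-edge replacement; you gesture at this but should make the case split explicit.
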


\begin{proof}
We prove this by induction on the number of vertices.
If $G$ has no circuit of length less than $5$, then we set $G^*=G$ and the lemma holds.

Suppose there is a $2$-cycle $uv$ in $G$. Let $u',v'$ be the other neighbours of $u$ and $v$. 
We create a graph $G'$ by removing the vertices $u$ and $v$ with adjacent edges and adding the edge $u'v'$. 
By induction hypothesis there is a graph $G'^*$ without circuits of length $2$, $3$
and $4$ such that $\omega_5(G'^*)\ge\omega_5(G')$ and for a $2$-factor of $G'^*$ with $k$ circuits of length $5$ there is a $2$-factor $F$ of $G'$ with at most $k$ circuits of length $5$ and no
triangles.
We set $G^*=G'^*$.
Because $F$ does not contain $3$-circuits we can extend every $F$ 
to a $2$-factor in $G$ without adding any $5$ or $3$-circuits:
we either add $2$-circuit $uv$ into $F$ or extend the circuit 
that contained the edge $u'v'$ as $u'uvv'$. 

Each further situation uses the same induction argument. We omit it and describe only the process of creating 
a $2$-factor of $G$ from the $2$-factor of $G'$ without introducing new $3$ or $5$-circuits. 

Suppose there is a triangle $uvw$ in $G$.  
We create $G'$ by contracting $u$, $v$, and $w$ into one vertex.
By induction hypothesis $G'$ has a $2$-factor $F$ satisfying the lemma with no $3$-circuits.
We can extend $F$ to a $2$-factor of $G$ without creating circuits of length $3$ or $5$.

Finally, suppose $G$ has no circuits of lengths $2$ and $3$ but a $4$-circuit $C=v_1v_2v_3v_4$. Let
$w_1$, $w_2$, $w_3$, and $w_4$ be the neighbours of $v_1$, $v_2$, $v_3$, and $v_4$ outside of $C$, respectively
(such neighbours exist due to the fact that $G$ has no circuits shorter than $4$). Only $w_1$ and $w_3$, and $w_2$ and $w_4$ can be identical. Otherwise there would be a shorter circuit.
If both $w_1=w_3$ and $w_2=w_4$, then we produce $G'$ by deleting all denoted vertices and by adding 
a new edge between the two resulting divalent vertices. It is easy to extend the $2$-factor of $G'$ to a
$2$-factor of $G$ without adding circuits of length $3$ or $5$.

Suppose that only one pair of vertices is identical, say $w_1=w_3$. We contract $v_1$, $v_2$, $v_3$, $v_4$,
$w_1$, and $w_3$ into a new vertex to produce $G'$. Again, it is easy to extend the $2$-factor of $G'$
to a $2$-factor of $G$.

We may now expect vertices $w_1$, $w_2$, $w_3$, and $w_4$ to be pairwise distinct.
We delete $v_1$, $v_2$, $v_3$, and $v_4$ and either add edges $w_1w_2$ and $w_3w_4$, or $w_1w_4$ and $w_2w_3$
to produce $G'$.
One of these two choices always guarantees that $G'$ is bridgeless \cite{snarks}.
By induction hypothesis there is a $2$-factor $F$ of $G'$ with no $3$-circuits and
containing the same number of $5$-circuits as some $2$-factor of $G^*$.
Since $F$ has no $3$-circuits we can extend it without introducing new $3$ or $5$-circuits.
\end{proof}

\begin{lemma}
For every snark $G$ there exists a snark $G^*$ of order not exceeding that of $G$ such that $\omega_5(G^*)\ge\omega_5(G)$ and every $2$-edge-cut in $G^*$ separates two uncolourable subgraphs of $G^*$.
Moreover, if $G^*$ has a $2$-factor with $k$ circuits of length $3$ and $m$ circuits of length $5$,
then also $G$ has a $2$-factor with at most $k$ circuits of length $3$ and at most $m$ circuits of length $5$.
\label{lemma2edgeCut2}
\end{lemma}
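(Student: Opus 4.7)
The plan is to mirror the proof of Lemma~\ref{lemma2edgeCut} from \cite{LMMS} but with careful bookkeeping of cycle lengths, by induction on $|V(G)|$. Base case: if every $2$-edge-cut of $G$ already separates two uncolourable subgraphs, set $G^*=G$ and the conclusion is immediate. Otherwise choose a $2$-edge-cut $\{e_1,e_2\}$ with $e_i=u_iv_i$ splitting $G$ into $G_1\ni u_i$ and $G_2\ni v_i$ so that $G_1^+:=G_1+u_1u_2$ is $3$-edge-colourable. Set $G':=G_2^+:=G_2+v_1v_2$; this $G'$ is uncolourable (else aligning a $3$-edge-colouring of $G_1^+$ with one of $G_2^+$ on the new edges would $3$-edge-colour $G$) and has fewer vertices than $G$ (since $|V(G_1)|\geq 2$). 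Apply induction to $G'$ to obtain $G^*$.

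The heart of the argument is to lift a $2$-factor $F'$ of $G'$ (with $k$ circuits of length $3$ and $m$ circuits of length $5$) to a $2$-factor $F$ of $G$ with at most $k$ circuits of length $3$ and at most $m$ circuits of length $5$. Since $G_1^+$ is cubic and $3$-edge-colourable, for any perfect matching $M$ of $G_1^+$ the set $G_1^+\setminus M$ is a $2$-factor consisting of \emph{even} circuits of length at least $4$. If $v_1v_2\in F'$, pick such $M$ avoiding $u_1u_2$, let $C_1$ be the circuit of $G_1^+\setminus M$ containing $u_1u_2$, and let $P$ be the $u_1$--$u_2$ path obtained by deleting $u_1u_2$ from $C_1$. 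Replace the edge $v_1v_2$ in $F'$ by the path $v_1u_1Pu_2v_2$ and add the remaining circuits of $G_1^+\setminus M$; the merged circuit has length $\ell+|C_1|\geq 3+4=7$, where $\ell\geq 3$ is the length of the circuit of $F'$ through $v_1v_2$, hence is neither a $3$- nor a $5$-circuit. If $v_1v_2\notin F'$, pick $M$ containing $u_1u_2$; then $G_1^+\setminus M$ does not use $u_1u_2$, so it restricts to a genuine $2$-factor of $G_1$ whose circuits are all even of length at least $4$. Combining it with $F'$ viewed as a $2$-factor of $G_2\subset G$ yields a $2$-factor of $G$ which simply omits $e_1,e_2$ and in which no new short circuit appears.

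In either case the circuits of $F$ are the circuits of $F'$ not through $v_1v_2$, possibly one merged circuit of length $\geq 7$, and even circuits of length $\geq 4$ coming from $G_1$; in particular the $3$-circuit and $5$-circuit counts of $F$ are at most those of $F'$. Composing with the lift from $G^*$ to $G'$ given by induction gives the required lift from $G^*$ to $G$, and in particular $\omega_5(G)\leq\omega_5(G^*)$. The main technical points to watch are the arithmetic $\ell+|C_1|\geq 7$ in the fusion case (using $|C_1|\geq 4$ and the evenness of $2$-factors arising from $3$-edge-colourings), the existence of the appropriate perfect matching $M$ (which always exists because $G_1^+$ has three colour classes, so one either contains or avoids $u_1u_2$), and the harmless edge case where $u_1,u_2$ are already adjacent in $G_1$ making $G_1^+$ a multigraph — proper edge-colourings of multigraphs give the same structural guarantees and the argument goes through unchanged.
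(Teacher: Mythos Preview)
Your argument follows the same strategy as the paper's: cut along a $2$-edge-cut with a colourable side, complete both sides to cubic graphs, and use a $3$-edge-colouring of the colourable side to supply even circuits when lifting a $2$-factor back to $G$. Two small points deserve tightening. First, the sentence ``for any perfect matching $M$ of $G_1^+$ the set $G_1^+\setminus M$ is a $2$-factor consisting of even circuits'' is only true when $M$ is a colour class, not an arbitrary perfect matching; you clearly intend this (you later speak of the three colour classes), but it should be stated correctly. Second, your dismissal of the case $u_1u_2\in E(G_1)$ as ``going through unchanged'' is not quite right: if $G_1^+$ has parallel edges $u_1u_2$, then one of the two colour classes avoiding the new edge yields a $2$-factor containing the $2$-circuit on $u_1,u_2$, giving $|C_1|=2$; merging with a triangle of $F'$ would then produce a new $5$-circuit. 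You must pick $M$ to be the colour class containing the \emph{old} edge $u_1u_2$, so that $G_1^+\setminus M$ is simple and $|C_1|\ge 4$. The paper sidesteps this entirely by taking the $2$-edge-cut whose colourable side is smallest, which forces $u_1,u_2$ to be non-adjacent and keeps $G_1^+$ simple. With that correction (or with the paper's minimality choice), your proof is correct and essentially identical to the paper's.
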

\begin{proof}
Suppose there exists a $2$-edge-cut separating a colourable subgraph.
We choose the cut that separates the smallest (in number of vertices)
colourable subgraph. Let $v_1v_2$ and $w_1w_2$ be the cut-edges such that
$v_1$ and $w_1$ are in the colourable component of 
$G-\{v_1v_2,w_1w_2\}$. Clearly $v_1$, $v_2$, $w_1$, and $w_2$ are all distinct 
otherwise $G$ has a bridge.
Moreover $v_1$ and $w_1$ are not neighbours as this would contradict our choice of the cut.

We delete the edges $v_1v_2$ and $w_1w_2$ and add two edges $v_1w_1$ and $v_2w_2$.
This creates two components $G_1$ and $G_2$. Let $G_1$ be the component containing $v_1$ (the colourable one).
We fix a colouring $c$ of $G_1$ and we pick a $2$-factor $F'$ of $G_2$ containing the least number of $5$-circuits.

If the edge $v_2w_2$ is not part of the $2$-factor, then we put the edges from $F'$,
and the edges not coloured with the colour of $v_1w_1$ in $G_1$ into $2$-factor of $G$.
Clearly no extra $3$ or $5$-circuits were added. 
Therefore we can set $G^*=G_2$.

If the edge $v_2w_2$ belongs to the $2$-factor, then we put the following edges to the $2$-factor of $G$:
edges from $F'$, 
edges coloured with the same colour as $v_1w_1$ in $G_1$, 
edges coloured with one other fixed colour in $G_1$, and
the edges $v_1v_2$ and $w_1w_2$. No new $5$-circuits besides the $5$-circuits of $F'$ are introduced: The circuits inside $G_1$ are even. Since $v_1$ and $v_2$ are not neighbours, the only possible new $5$-circuit is
$v_1v_2w_2w_1v$ where $v$ is a common neighbour of $v_1$ and $w_1$. This is also not possible because the
edges $v_1v$ and $w_1v$ are both coloured differently from the edge $v_1w_1$. We can again set $G^*=G_2$.

The same construction can be usod to prove that to a given $2$-factor of $G^*$ we can create a $2$-factor of $G$ without introducing $3$ or $5$-circuits.
\end{proof}
\begin{lemma}
For every snark $G$ there exists a snark $G^*$ of order not exceeding that of $G$ such that 
$\omega_5(G^*)\ge\omega_5(G)$ and every non-trivial $3$-edge-cut in $G^*$ separates two uncolourable subgraphs of $G^*$.
Moreover, if $G^*$ has a $2$-factor with $k$ circuits of length $3$ and $m$ circuits of length $5$,
then also $G$ has a $2$-factor with at most $k$ circuits of length $3$ and at most $m$ circuits of length $5$.
\label{lemma3edgeCut2}
\end{lemma}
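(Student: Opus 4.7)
The plan is to proceed by induction on $|V(G)|$, following the strategy of Lemma \ref{lemma2edgeCut2} adapted to $3$-edge-cuts. If every non-trivial $3$-edge-cut of $G$ already separates two uncolourable subgraphs, we set $G^{*}=G$. Otherwise, I would pick a non-trivial $3$-edge-cut $\{e_1,e_2,e_3\}$, with $e_i=v_iw_i$, chosen so that the colourable side $H_1$ (the one containing $v_1,v_2,v_3$) has the smallest possible number of vertices. Form $G'$ by deleting $V(H_1)$ and joining $w_1,w_2,w_3$ to a new vertex $v^{*}$; standard arguments show $G'$ is cubic and bridgeless, and since $G$ is uncolourable while $H_1$ is colourable, $G'$ is a snark with $|V(G')|<|V(G)|$. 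Applying the induction hypothesis to $G'$ gives the desired $G^{*}$; it therefore suffices to prove that for every $2$-factor $F'$ of $G'$ with $k$ triangles and $m$ $5$-circuits, $G$ admits a $2$-factor with at most $k$ triangles and at most $m$ $5$-circuits.

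For the extension, exactly two of the edges $v^{*}w_i$ lie in $F'$; without loss of generality $v^{*}w_1,v^{*}w_2\in F'$. Let $\bar H_1:=H_1+v^{*}$ with $v^{*}$ joined to $v_1,v_2,v_3$. Since $H_1$ is colourable, $\bar H_1$ admits a $3$-edge-colouring, and by permuting colours we may assume that $v^{*}v_1,v^{*}v_2,v^{*}v_3$ receive colours $\alpha,\beta,\gamma$ respectively. Let $M$ be the $2$-factor of $\bar H_1$ formed by the edges coloured $\alpha$ or $\beta$, and set $M^{*}:=M-v^{*}$. Then
$$F:=\bigl(F'\setminus\{v^{*}w_1,v^{*}w_2\}\bigr)\cup\{e_1,e_2\}\cup M^{*}$$
is a $2$-factor of $G$, whose circuits split into three types: circuits of $F'$ avoiding $v^{*}$ (preserved), circuits of $M$ avoiding $v^{*}$ (even, as unions of two colour classes, hence contributing no new $3$- or $5$-circuits), and the unique circuit $C^{*}$ of $F'$ through $v^{*}$, which is replaced by a circuit of length $|C^{*}|+q$, where $q$ is the length of the $v_1$-$v_2$ path in $M^{*}$. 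A direct colour check at $v^{*}$, $v_1$, and $v_2$ shows that whenever $v_1v_2$ is an edge of $H_1$ it must receive colour $\gamma$, so $v_1v_2\notin M$ and $q\geq 2$; thus no triangle of $F'$ is converted into a $4$-circuit, no $4$-circuit through $v^{*}$ becomes a $5$-circuit, and any $5$-circuit through $v^{*}$ becomes a circuit of length at least $7$.

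The only remaining way to introduce a new short circuit, and the main obstacle I anticipate, is the configuration $|C^{*}|=3$ together with $q=2$: a triangle $v^{*}w_1w_2$ of $F'$ combined with a length-two path $v_1yv_2$ in $M^{*}$ yields a new $5$-circuit $v_1w_1w_2v_2y$ in $F$. I plan to rule this out by exploiting the flexibility in the colouring of $\bar H_1$: when $v_1,v_2$ share a common neighbour $y$ with $yv_1,yv_2\in M$, a Kempe-chain swap of colours $\beta$ and $\gamma$ along the chain through $y$ reassigns $yv_1$ to colour $\gamma$, removing $y$ from the $v_1$-$v_2$ path of the updated $M^{*}$ and forcing $q\geq 3$. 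Should the chain pass through $v^{*}$, one renames the colours so that the constraints at $v^{*}$ still place $v^{*}w_1,v^{*}w_2$ into the new $2$-factor. In the degenerate case that no such swap is available, the minimality of $|V(H_1)|$ forces the triangle $v^{*}w_1w_2$ to live inside a tight substructure of $G'$, which one handles by a direct local modification of $F'$ along the $3$-edge-cut $\{v^{*}w_3,w_1w_1',w_2w_2'\}$ in $G'$: this produces an alternative $2$-factor of $G'$ avoiding the triangle through $v^{*}$, preserves the $3$- and $5$-circuit counts, and reduces us to the already handled case.
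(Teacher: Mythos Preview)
Your approach coincides with the paper's: contract the colourable side of a minimal non-trivial $3$-edge-cut to a single vertex and lift a $2$-factor of the smaller graph back through a $3$-edge-colouring of the colourable side. The paper's own proof is extremely terse: it first appeals to Lemmas~\ref{lemmaGirth2} and~\ref{lemma2edgeCut2} to assume $G$ is triangle-free with no $2$-edge-cut separating a colourable part, records that the three attachment vertices on the colourable side are pairwise non-adjacent, and then simply writes ``similar arguments as in Lemma~\ref{lemma2edgeCut2}'' for the entire lifting step. You supply far more of the lifting argument than the paper does, and you correctly isolate the one genuinely delicate configuration, namely a triangle $v^{*}w_1w_2$ in $F'$ combined with a length-$2$ path $v_1yv_2$ in $M^{*}$.

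Your resolution of that configuration, however, does not close. When the $\beta\gamma$-Kempe chain through $y$ also passes through $v^{*}$, swapping and then renaming the colours so that $v^{*}v_1,v^{*}v_2$ remain in the new $2$-factor has no effect on $q$: after the swap one has $c(yv_1)=\gamma$ and $c(yv_2)=\alpha$ (the latter untouched, since $yv_2$ has colour $\alpha$ and is not on a $\beta\gamma$-chain), while $c(v^{*}v_1)=\alpha$ and $c(v^{*}v_2)=\gamma$, so the required $2$-factor is the $\{\alpha,\gamma\}$-factor and both $yv_1,yv_2$ still lie in it---the path is again $v_1yv_2$. The $\alpha\gamma$-swap behaves symmetrically. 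Your fallback ``direct local modification of $F'$ along the $3$-edge-cut $\{v^{*}w_3,w_1w_1',w_2w_2'\}$'' is not specified and cannot in general be carried out without disturbing the $5$-circuit count: rerouting $F'$ through $w_3$, $w_1'$ or $w_2'$ changes the circuits of $F'$ containing those vertices, and nothing you have said prevents one of them from becoming, or ceasing to be, a $5$-circuit. So the argument has a gap precisely at the point you flagged as the main obstacle. The paper does not spell out how the ``similar arguments'' cover this case either; note, though, that in the paper's only application (Theorem~\ref{thm2}) the $2$-factor being lifted is triangle-free, whence $|C^{*}|\ge 4$ and the obstacle simply does not arise.
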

\begin{proof}
We can assume by the above lemmas that $G$ has no triangles and no $2$-edge-cut that separates colourable subgraph in $G$. 
We choose a non-trivial $3$-edge-cut that separates the smallest colourable subgraph. 
Let $v_1v_2$, $w_1w_2$, $x_1x_2$ be the cut-edges such that the vertices $v_1$, $w_1$, and $x_1$ are in the colourable subgraph. The vertices $v_1$, $v_2$, $w_1$, $w_2$, $x_1$, and $x_2$ are pairwise distincs otherwise there is a $2$-edge-cut in the graph.
Moreover, no two vertices of $v_1$, $w_1$, and $x_1$ are neighbours as this either contradicts the choice of
the $3$-edge-cut or the fact that $G$ has no triangles.

We create two new vertices $y_1$ and $y_2$, delete the edges $v_1v_2$, $w_1w_2$, and $x_1x_2$
and add the edges $v_1y_1$, $w_1y_1$, $x_1y_1$, $v_2y_2$, $w_2y_2$, and $x_2y_2$.
This creates two components: $G_1$ (containing $y_1$) and $G_2$ (containing $y_2$).

We set $G^*=G_2$. Similar arguments as in Lemma \ref{lemma2edgeCut2} can be used to prove the statement of the lemma.
\end{proof}

\section{Avoiding $5$-circuits}

We use a similar approach to bound $\omega_5(G)$ as we used to bound $\omega(G)$ in Section 2.

\begin{theorem}
Let $G$ be a $2$-edge-connected cubic graph $G$ on $n$ vertices not isomorphic to the Petersen graph. Graph $G$ has a $2$-factor that contains no triangle and at most $2(n-2)/15$ circuits of length $5$.
\label{thm2}
\end{theorem}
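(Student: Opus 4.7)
The plan is to adapt the LP-based argument of Theorem~\ref{thm1}, specialised so that the odd-circuit invariant $I$ collapses to the plain indicator that a circuit has length five; this shortens the proof considerably and makes the bookkeeping around Petersen-derived subgraphs ${\cal P}$ much lighter. First, using Lemmas~\ref{lemmaGirth2}, \ref{lemma2edgeCut2}, and \ref{lemma3edgeCut2}, I replace $G$ by a graph $G^*$ with $|V(G^*)|\le n$, girth at least $5$, and in which every $2$-edge-cut and every non-trivial $3$-edge-cut separates two uncolourable subgraphs. The lifting property built into those lemmas turns any $2$-factor of $G^*$ with $m$ circuits of length $5$ into a $2$-factor of $G$ with no triangles and at most $m$ circuits of length $5$, so it is enough to construct such a $2$-factor of $G^*$.

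Assuming that $G^*$ is not the Petersen graph, let ${\cal C}_5$ be the set of $5$-circuits of $G^*$, and for each $C\in{\cal C}_5$ let $E_C$ be its five boundary edges. Define
\[
  f(p) = \sum_{C\in{\cal C}_5}\sum_{e\in E_C} p_e
\]
on the perfect matching polytope ${\cal M}(G^*)$. By Edmonds' characterisation, $(1/3,\ldots,1/3)\in{\cal M}(G^*)$, so a perfect matching $M$ minimising $f$ over ${\cal M}(G^*)$ satisfies $f(M)\le 5|{\cal C}_5|/3$. A quick case check on whether $0$, $1$, or $2$ edges of $C$ lie in $M$ (giving $\sum_{e\in E_C} p_e\in\{5,3,1\}$) produces the pointwise inequality $[C\in F_M] \le (\sum_{e\in E_C} p_e - 1)/4$, which summed over ${\cal C}_5$ yields $|\{C\in{\cal C}_5:C\in F_M\}|\le (f(M)-|{\cal C}_5|)/4\le |{\cal C}_5|/6$.

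Next, I bound $|{\cal C}_5|$ in terms of $n^*=|V(G^*)|$ by re-running the vertex-counting argument from the proof of Lemma~\ref{l3}, simplified because there are no ${\cal P}$-contributions to track: since $G^*\not\cong P$ and no small edge-cut separates a colourable subgraph, each vertex of $G^*$ lies in at most four $5$-circuits, and the double-count inequality $|V_4|\le|V_2\cup V_1|$ of Lemma~\ref{cl1} forces $5|{\cal C}_5|\le 3n^*$, i.e.\ $|{\cal C}_5|\le 3n^*/5$. Combining this with the LP bound and lifting yields a triangle-free $2$-factor of $G$ with at most $n^*/10\le n/10\le 2(n-2)/15$ five-circuits whenever $n\ge 10$.

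The main obstacles I expect are twofold. First, if the reduction collapses $G$ to Petersen itself, the lifted $2$-factor has $\omega_5(G^*)=2$ while the target $2(n-2)/15<2$ for $n\le 16$; these finitely many small instances must be handled separately, either by ruling out the reduction or by constructing a better $2$-factor of $G$ using the extra vertices outside the Petersen core. Second, the ``at most four $5$-circuits per vertex'' bound is delicate when $G^*$ contains a subgraph isomorphic to $P_1$, $P_2$, or $P_3$, since inner vertices of such subgraphs may lie in more five-circuits; as in Theorem~\ref{thm1}, these subgraphs should be isolated and given boundary-edge terms in $f$, echoing Parts~2--4 of that proof. The considerable slack in $2(n-2)/15$ relative to the sharper constant $5.8\overline{3}$ of Theorem~\ref{thm1} should make the resulting arithmetic routine.
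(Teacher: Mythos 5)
Your proposal has a genuine gap that invalidates the claimed conclusion. You suggest that, after the reductions, the ``at most four $5$-circuits per vertex'' count can be run \emph{without} tracking the subgraphs from ${\cal P}$, giving $|{\cal C}_5|\le 3n^*/5$ and hence at most $n^*/10$ five-circuits. But the paper's bound ``$v$ is in at most four $5$-circuits'' is proved \emph{only} for vertices $v$ lying outside ${\cal P}$: the step that rules out $v$ being in five $5$-circuits works by noting that the resulting $2$-edge-cut would separate a colourable subgraph, and that conclusion explicitly uses ``$v$ is outside ${\cal P}$.'' An inner vertex of a subgraph isomorphic to $P_1$ can lie in five or six $5$-circuits without contradiction. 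More decisively, the $n/10$ bound you deduce is false in general: the paper exhibits, right after Theorem~\ref{thm2}, graphs on $30k+2$ vertices that have $4k$ five-circuits in every $2$-factor, so $V(G)/\omega_5(G)=7.5+o(1)$, strictly below $10$. These examples are precisely chains of $P_1$-subgraphs, i.e.\ exactly the objects your simplification discards. So the ``slack'' you mention does not exist; the constant $2/15$ is tight, and it is driven by the ${\cal P}$-terms.

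The paper's actual proof must therefore carry the ${\cal P}$-bookkeeping through. It introduces ${\cal P}'_1$ (copies of $P_1$) and ${\cal P}_3$ (copies of $P_3$ not inside a $P_1$), adds a boundary-edge term $\sum_{S\in{\cal P}'_1}p_{e_S}$ to the linear functional, and arrives at
\[
I(M)\le \tfrac16|{\cal C}_5|+\tfrac43|{\cal P}'_1|+|{\cal P}_3|,
\qquad
V(G)\ge \tfrac53|{\cal C}_5|+10|{\cal P}'_1|+9|{\cal P}_3|,
\]
yielding $V(G)/\omega_5(G)\ge 7.5$ with equality only when $V(G)$ is entirely covered by ${\cal P}'_1$-subgraphs. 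A short equality-case analysis (such a graph actually has ratio $10$) then upgrades $V(G)/\omega_5(G)\ge 7.5$ to $(V(G)-2)/\omega_5(G)\ge 7.5$, which is the stated $2(n-2)/15$. Your proposal names these as obstacles but leaves them unresolved; resolving them is the substance of the proof, not a technicality, because the final constant depends on them.
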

\begin{proof}
Suppose that $G$ is the smallest counterexample to the Theorem \ref{thm2}.
If $G$ is $3$-edge-colourable, then $G$ has an even $2$-factor. 
If $G$ is a snark, then
by Lemmas \ref{lemmaGirth2}, \ref{lemma2edgeCut2}, and  \ref{lemma3edgeCut2} the graph $G$
either reduces to the Petersen graph, or each $2$-edge-cut and non-trivial $3$-edge-cut separates uncolourable subgraphs and girth is $5$.
If $G$ reduces to the Petersen graph by any of these lemmas, then
it fulfils the Theorem \ref{thm2}.

The proof will follow the proof of Theorem \ref{thm1}. 
We modify the invariants and obtain the result by a similar 
argumentation. Where the argumentation is the same, we will refer to the proof of Theorem \ref{thm1}.

Let ${\cal P}'_1$ be the set of subgraphs of $G$ isomorphic to the graph $P_1$ and 
let ${\cal P}_3$ be the set of subgraphs of $G$ isomorphic to the graph $P_3$ that 
cannot be extended to a subgraph isomorphic to $P_1$
(both defined in Figure~\ref{fig}). Note that compared to the proof of Theorem \ref{thm1},
we do not care whether a subgraph isomorphic to $P_1$ can be extended to a subgraph isomorphic to $P_2$.
Let ${\cal P} = {\cal P}'_1 \cup {\cal P}_3$.
By Lemma \ref{ldis} we know that subgraphs in ${\cal P}$
are pairwise disjoint.

Let $M$ be a perfect matching of $G$ and let $F_M$ be the complementary $2$-factor.
Let ${\cal C}(M)$ be the set of circuits of $F_M$. We define the symbol $|C|_5$ as follows:
$|C|_5=1$ if $C$ is a circuit of length $5$ and $|C|_5=0$ otherwise.
Let 
$$
I(M)=\sum_{C\in {\cal C}(M)} |C|_5.
$$
Note that now the invariant $I(M)$ simply counts the number of circuits of length $5$. 
We will show that we can choose $M$ so that $I(M)$ is small enough. 

Let $C\in {\cal C(M)}$ and $S\in{\cal P}$. If $C$ is a $5$-circuit intersecting $S$, we define $I(C,S,M)=1$, 
otherwise $I(C,S,M)=0$.
(Note that due to properties of $P_1$ and $P_3$ a $5$-circuit cannot intersect two subgraphs from ${\cal P}$.)
Let $I(S,M)=\sum_{C\in{\cal C}(M)} I(C,S,M)$.
If $C\in {\cal C}(M)$ is a $5$-circuit that does not intersect any $S\in {\cal P}$, then we define
$I(C,M)=1$, otherwise $I(C,M)=0$.

We can rewrite the invariant $I$ as
\begin{eqnarray}
I(M)=\sum_{C\in{\cal C}(M)} I(C,M) + \sum_{S\in{\cal P}'_1} I(S,M) + \sum_{S\in{\cal P}_3} I(S,M). 
\label{eqn4}
\end{eqnarray}

Now we define a linear function to minimize over ${\cal M}(G)$. For each $S\in{\cal P}'_1$ let $e_S$ be one arbitrary edge on the boundary of $S$ (an edge with exactly one vertex belonging to $S$). 
Let ${\cal C}_5$ be the set of circuits of length $5$ in $G$ that do not intersect any subgraph from ${\cal P}$.
For each $C \in {\cal C}_5$ let $E_C$ be the set of edges on the boundary of $C$.

We find a point $p$ in ${\cal M}(G)$ such that
$$
f(p)=\left(\sum_{C \in {\cal C}_5} \frac{1}{4}\sum_{e\in E_C} p_e  \right) + \sum_{S\in {\cal P}'_1} p_{e_S}
$$
is minimal. The optimal point $p\in{\cal M}(G)$ is a perfect matching and satisfies
\begin{eqnarray}
f(p) \le 5/12 |{\cal C}_5|  + 1/3 |{\cal P}'_1|.
\label{eqn5}
\end{eqnarray}

Let $M$ be a perfect matching satisfying $f(M)=f(p)$. We bound
the summands in (\ref{eqn4}) one by one.

\medskip
\noindent {\bf Part 1:} First, we consider the value of $I(C,M)$ for $C\in{\cal C}(M)$. 
For a circuit $C\in {\cal C}(M)$ of length more than $5$ or for a circuit that intersects some subgraph from ${\cal P}$
we have $I(C,M) = 0$ by definition. Consider a circuit $C\in {\cal C}_5$ of length $5$. All boundary edges belong to $M$, that is $\sum_{e\in E_C} p_e=5$. By definition, we have $I(C,M)=1$. 
Because $\sum_{e\in E_C} p_e\ge 1$
we know that set $I(C,M)\le -1/4+1/4\sum_{e\in E_C} p_e$ and altogether
$$
\sum_{C\in{\cal C}(M)} I(C,M) \le 
\left(\sum_{C \in {\cal C}_5} \frac{1}{4}\sum_{e\in E_C} p_e \right)- \frac{1}{4}|{\cal C}_5|.
$$

\medskip
\noindent {\bf Part 2:} Consider a subgraph $S \in {\cal P}'_1$. Suppose that $p_{e_S}=1$. This means that both edges on the boundary of $S$ belong to $M$, and $I(S,M)=2$. 
On the other hand, if $p_{e_S}=0$, then $I(S,M)=1$ and
Together $I(S,M) \le p_{e_S} + 1$ and
$$
\sum_{S\in{\cal P}'_1} I(S,M)\le \left(\sum_{S\in {\cal P}'_1} p_{e_S}\right)+|{\cal P}'_1|.
$$

\medskip
\noindent {\bf Part 3:} Suppose we have a subgraph $S \in {\cal P}_{3}$. There is at most one $5$-circuit
intersecting $S$ (otherwise $S\in{P_1}$). Therefore $I(S,M) \leq 1$ and
$$
\sum_{S\in{\cal P}_3} I(S,M)\le |{\cal P}_3|.
$$

 Altogether $I(M)\le f(M) -|{\cal C}_5|/4 + |{\cal P}'_1| + |{\cal P}_3|$. 
Using (\ref{eqn5}), we have
$$
I(M) \le 1/6 |{\cal C}_5|  + 4/3 |{\cal P}'_1| +|{\cal P}_3|.
$$

Lemma \ref{l3} can be simplified to show that
for a $2$-edge-connected graph $G$,
$V(G) \ge 5/3 |{\cal C}_5| + 10 |{\cal P}'_1| + 9 |{\cal P}_{3}|.$
The function $I(M)$ counts the number of $5$-circuits in ${\cal C}(M)$. Therefore $\omega_5(G)\le I(M)$.
Altogether we have
\begin{eqnarray}
\frac{V(G)}{\omega_5(G)}\ge 
\frac
{5/3 |{\cal C}_5| + 10 |{\cal P}'_1| + 9 |{\cal P}_{3}|}
{1/6 |{\cal C}_5| + 4/3|{\cal P}'_1|+|{\cal P}_3|} \ge 7.5. 
\label{meq}
\end{eqnarray}

Note that the equality may hold only if all vertices of $G$ belong to some subgraph from ${\cal P}'_1$.
It is easily deducible that if all vertices are in a subgraph from ${\cal P}'_1$, then for such a graph $V(G)/\omega_5(G) = 10$. This improves our bound
to $(V(G)-2)/ \omega_5(G) \ge 7.5$, which is what we wanted to prove.
\end{proof}

We show that Theorem \ref{thm2} is the best possible. We can easily construct
a graph on $30k+2$ vertices that must contain $4k$ circuits of length $5$ in its $2$-factor.
We start with a cubic multigraph on $2$ vertices and replace the three edges
with three ``chains'' each consisting of $k$ subgraphs isomorphic to $P_1$. From formula (\ref{meq}) it can be deduced that these are all graphs for which Theorem \ref{thm2} is tight.
Note that these graphs satisfy Conjecture \ref{con}.
This gives additional support for this conjecture.

If we forbid subgraphs isomorphic to $P_1$ in $G$ after reductions using Lemmas
\ref{lemmaGirth2}, \ref{lemma2edgeCut2}, and \ref{lemma3edgeCut2} our bound improves 
to $9$, e.g. $3$-edge-connected graphs of girth $5$ without $3$-edge-cuts separating
colourable subgraphs. We believe that $9$ is correct  for $3$-edge-connected graphs even without these additional technical conditions.
\begin{conjecture}
Let $G$ be a $3$-edge-connected cubic graph $G$ on $n$ vertices not isomorphic to the Petersen graph. Graph $G$ has a $2$-factor that contains at most $n/9$ circuits of length $5$.
\end{conjecture}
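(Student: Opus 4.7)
The plan is to rerun the argument of Theorem~\ref{thm2} under the stronger hypothesis of $3$-edge-connectivity and exploit the observation that in this setting ${\cal P}'_1$ is forced to be empty. Indeed, $P_1$ (the Petersen graph with one edge removed) has exactly two vertices of degree~$2$, so any subgraph of $G$ isomorphic to $P_1$ has two boundary edges forming a $2$-edge-cut, which cannot occur in a $3$-edge-connected graph. With $|{\cal P}'_1|=0$ the final ratio in the proof of Theorem~\ref{thm2} collapses to
$$
\frac{V(G)}{\omega_5(G)} \ge \frac{\tfrac{5}{3}|{\cal C}_5|+9|{\cal P}_3|}{\tfrac{1}{6}|{\cal C}_5|+|{\cal P}_3|}\ge 9,
$$
which is exactly the conjectured bound $\omega_5(G)\le n/9$.

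To implement this, take $G$ to be a smallest $3$-edge-connected counterexample and rerun Lemmas~\ref{lemmaGirth2}, \ref{lemma2edgeCut2}, and~\ref{lemma3edgeCut2} while keeping the current graph $3$-edge-connected. A $3$-edge-connected cubic graph has no parallel edges and Lemma~\ref{lemma2edgeCut2} is vacuous. Contracting a triangle in a cubic graph preserves $3$-edge-connectivity, because every edge-cut of the contracted graph lifts to an edge-cut of $G$ of the same size. For a non-trivial $3$-edge-cut separating a colourable side, the standard splicing used in Lemma~\ref{lemma3edgeCut2} leaves the retained side $3$-edge-connected, since any smaller edge-cut on that side in the reduced graph would correspond to one of the same size in $G$ on the same side of the original cut.

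The main obstacle is the $4$-circuit reduction. When a $4$-circuit $C=v_1v_2v_3v_4$ with exterior neighbours $w_1,w_2,w_3,w_4$ is replaced by two new edges among the $w_i$'s, the resulting graph may fail to be $3$-edge-connected, although one of the two admissible pairings is known to preserve bridgelessness~\cite{snarks}. The hard part is to show that at least one pairing also preserves $3$-edge-connectivity, or, failing that, to classify the obstructing configurations---expected to be a short list of small $K_{3,3}$-like attachments around $C$---and dispose of them by direct ad~hoc reductions. This step will require a detailed neighbourhood case analysis in the spirit of the $S_{242}$, $S_{323a}$, $S_{323b}$ dichotomy used inside the proof of Lemma~\ref{l3}, and is where the bulk of the technical work will lie.

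Once the reductions succeed, the resulting graph $G^*$ is $3$-edge-connected, of girth at least $5$, admits no non-trivial $3$-edge-cut separating a colourable subgraph, and is not isomorphic to the Petersen graph (small cases in which the reductions could collapse $G$ onto the Petersen graph must be verified by hand to satisfy the bound). By the key observation above $|{\cal P}'_1|=0$, so the weighted perfect-matching-polytope argument of Theorem~\ref{thm2} applied to $G^*$ together with the ${\cal P}'_1$-free form of Lemma~\ref{l3} yields $V(G^*)/\omega_5(G^*)\ge 9$. Combined with $V(G^*)\le n$ and $\omega_5(G^*)\ge \omega_5(G)$ guaranteed by the reductions, this gives $n/\omega_5(G)\ge 9$, completing the proof.
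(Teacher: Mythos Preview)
The statement you are attempting is labelled a \emph{conjecture} in the paper; there is no proof of it there. The paper merely remarks that the machinery of Theorem~\ref{thm2} yields the bound $n/9$ whenever the reduced graph has $|{\cal P}'_1|=0$, and notes that this is automatic for $3$-edge-connected graphs of girth at least~$5$ with no non-trivial $3$-edge-cut separating a colourable subgraph---i.e.\ exactly those graphs for which none of Lemmas~\ref{lemmaGirth2}--\ref{lemma3edgeCut2} has to be invoked. The authors then say that they \emph{believe} the bound extends to all $3$-edge-connected cubic graphs and pose that as the conjecture.

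Your plan follows this discussion correctly. The observation that a copy of $P_1$ forces a $2$-edge-cut (hence $|{\cal P}'_1|=0$ under $3$-edge-connectivity) is right, and your claims that triangle contraction and the splice in Lemma~\ref{lemma3edgeCut2} preserve $3$-edge-connectivity are valid. But the $4$-circuit step, which you yourself flag as ``where the bulk of the technical work will lie'', is a genuine gap and is precisely the obstruction that kept the authors from upgrading their remark to a theorem. Lemma~\ref{lemmaGirth2} (via~\cite{snarks}) only guarantees that one of the two admissible pairings is bridgeless; nothing in the paper asserts that one of them remains $3$-edge-connected, and your ``short list of small $K_{3,3}$-like attachments'' is a hope rather than an argument. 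As written, the proposal is a reasonable attack plan that reproduces the paper's heuristic, but it is not a proof of the conjecture, and the missing step is exactly the one the paper could not supply.
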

\noindent Infinitely many graphs attain the bound in this conjecture: We take even number of copies of $P_3$
and connect them so that no $2$-edge-cut is created.

Similarly, if $G$ does not contain a subgraph isomorphic to $P_3$ nor such a subgraph is created 
by some reduction (note that $P_3$ is a subgraph of $P_1$), then our bound is improved to $n/10$.
\begin{theorem}
Let $G$ be a cyclically $4$-edge-connected cubic graph of girth $5$ on $n$ vertices not isomorphic to the Petersen graph. Then $G$ has a $2$-factor that contains at most $n/10$ circuits of length $5$.
\end{theorem}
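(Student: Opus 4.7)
The plan is to run the proof of Theorem \ref{thm2} directly on $G$, exploiting the stronger hypotheses to make the Petersen-like subgraph families $\mathcal{P}'_1$ and $\mathcal{P}_3$ empty; this collapses the key ratio from $7.5$ to $10$. First I would check that none of the three reduction lemmas change $G$. Girth $5$ makes Lemma \ref{lemmaGirth2} superfluous. In any simple cubic graph, a $2$-edge-cut is automatically cycle-separating, and every non-trivial $3$-edge-cut is cycle-separating (an edge count on each side of a $k$-vertex part forces enough edges to contain a cycle), so cyclic $4$-edge-connectivity forbids both; Lemmas \ref{lemma2edgeCut2} and \ref{lemma3edgeCut2} are therefore vacuous and we may take $G^* = G$.

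Next I would establish $\mathcal{P}'_1 = \mathcal{P}_3 = \emptyset$. A copy of $P_1$ in $G$ (Petersen minus an edge, with two vertices of degree $2$) would contribute a cycle-separating $2$-edge-cut across its $10$-vertex side, and a copy of $P_3$ (Petersen minus a vertex, with three vertices of degree $2$) would contribute a cycle-separating non-trivial $3$-edge-cut across its $9$-vertex side; both are forbidden by cyclic $4$-edge-connectivity. So the entire ${\cal P}$-machinery of Theorem \ref{thm2} disappears.

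Finally I would invoke that machinery with these simplifications. With $\mathcal{P}'_1 = \mathcal{P}_3 = \emptyset$, the linear function $f$ reduces to $\tfrac14 \sum_{C \in \mathcal{C}_5} \sum_{e \in E_C} p_e$; its minimum over ${\cal M}(G)$ satisfies $f(M) \le \tfrac{5}{12} |\mathcal{C}_5|$, and Part $1$ of the proof of Theorem \ref{thm2} yields $I(M) \le \tfrac16 |\mathcal{C}_5|$ (Parts $2$ and $3$ being empty sums). Lemma \ref{l3} collapses to $V(G) \ge \tfrac{5}{3} |\mathcal{C}_5|$, since there are no counted vertices; the Petersen exclusion built into Lemma \ref{l3} is precisely consumed by the hypothesis $G \not\cong$ Petersen, which rules out an uncounted vertex lying in six $5$-circuits. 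Combining, $\omega_5(G) \le I(M) \le |\mathcal{C}_5|/6 \le V(G)/10 = n/10$. There is no genuine obstacle; the content of the argument is the observation that cyclic $4$-edge-connectivity together with girth $5$ is exactly what annihilates the ${\cal P}$-contributions that drive the weaker $2(n-2)/15$ bound of Theorem \ref{thm2}.
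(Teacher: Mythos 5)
Your proposal is correct and reproduces the paper's intended argument; the paper itself treats this theorem as a one-sentence corollary of the machinery of Theorem~\ref{thm2}, remarking only that excluding $P_3$ (and hence $P_1$) as a subgraph, both before and after reductions, pushes the bound from $2(n-2)/15$ to $n/10$. You correctly identify that cyclic $4$-edge-connectivity together with girth $5$ renders Lemmas~\ref{lemmaGirth2}--\ref{lemma3edgeCut2} vacuous (since every $2$-edge-cut and every non-trivial $3$-edge-cut in a simple cubic graph is cycle-separating) and empties ${\cal P}'_1$ and ${\cal P}_3$, and you correctly trace through the surviving pieces of the invariant computation and of Lemma~\ref{l3} to reach $\omega_5(G)\le |{\cal C}_5|/6\le n/10$; this is exactly what the authors leave implicit.
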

\noindent 
An infinite family of cubic graphs is known which shows that the bound cannot be improved beyond $n/27$ \cite{5c}.

\end{document}